\numberwithin{equation}{section}
\DeclareMathOperator{\sech}{sech}
\newcommand{\Bigabs}[1]{\Bigl\vert #1 \Bigr\vert}
\newcommand{\norm}[1]{\left\Vert #1 \right\Vert}
\newcommand{\sett}[1]{\left\{   #1   \right\}}
\newcommand{\C}{\mathbb{C}}
\newcommand{\N}{\mathbb{N}}
\newcommand{\Z}{\mathbb{Z}}
\newcommand{\R}{\mathbb{R}}
\newcommand{\angles}[1]{\langle #1 \rangle}
\DeclareMathOperator{\supp}{supp}
\newtheorem{theorem}{Theorem}
\newtheorem{lemma}{Lemma}
\theoremstyle{definition}
\theoremstyle{remark}
\newtheorem{remark}{Remark}
 \author[A. Esfahani]{Amin Esfahani}
\address{Department of Mathematics, Nazarbayev University, Nur-Sultan 010000, Kazakhstan}
 \email{saesfahani@gmail.com, amin.esfahani@nu.edu.kz}
\begin{document}
 \author[A. Tesfahun]{Achenef Tesfahun}
\address{Department of Mathematics, Nazarbayev University, Nur-Sultan 010000, Kazakhstan}
 \email{achenef.tesfahun@nu.edu.kz }

%

\title[Well-posedness and analyticity for Boussinesq equation]{Well-posedness and analyticity of solutions for the sixth-order Boussinesq equation}
\begin{abstract}
Studied in this paper is the sixth-order Boussinesq equation. We extend the local well-posedness theory for this equation with quadratic and cubic nonlinearities to  the high dimensional case. In spite of having the ``bad'' fourth term $\Delta u$ in the equation, we derive some dispersive estimates leading to the existence of local solutions which also improves the previous results in the cubic case. In addition, we show persistence of spatial analyticity of solutions for the cubic nonlinearity.
\end{abstract}
\keywords{Boussinesq equation, Locall well-posedness, Dispersive estimates, Radius of analyticity of solution}
\subjclass[2020]{35B30, 35Q53, 76B15,  35L70, 37K10}
\maketitle

\section{Introduction}
The Boussinesq equation
\begin{equation}\label{bad-bouss}
u_{tt}-u_{xx}-u_{xxxx}+(u^2)_{xx}=0
\end{equation}
was first derived from   Euler's equations of motion by 
	Joseph  Boussinesq in 1872 \cite{bouss} to describe the propagation of
	  small amplitude long waves on the surface of water. This equation is so-called the  \textit{bad}  Boussinesq equation  in the literature because of  $-u_{xxxx}$ causing the nonexistence of local solution of \eqref{bad-bouss} on the unbounded special domain $\R$.
By 
flipping the sign on $u_{xxxx}$, 		  \eqref{bad-bouss} can also be altered yielding the 	    \textit{good}  Boussinesq equation
	\begin{equation}\label{good-bouss}
		u_{tt}-u_{xx}+u_{xxxx}+(u^2)_{xx}=0.
	\end{equation}
Both above Boussinesq equations   appear not only in the study of the dynamics of thin inviscid layers with free surface but also in the study of nonlinear vibrations along a string, the shape-memory alloys, the propagation of waves in elastic rods and in the continuum limit of lattice dynamics or electromagnetic waves in nonlinear dielectric materials (\cite{turitsyn,Zakharov}). 

The mathematical study on the initial value problem associated with 
\eqref{good-bouss} has been well-developed in the past decades. We refer the reader to the results on the well-posedness in \cite{ghk,gebawitz,kishimoto,kist}, and the references therein.

Equations \eqref{bad-bouss} and \eqref{good-bouss} can be extended to the sixth-order Boussinesq equation
 \begin{equation}\label{sBeq}
	u_{tt}=u_{xx} -\beta   u_{xxxx}+ u_{xxxxxx}-  (u^2)_{xx},  
\end{equation}
where $\beta=\pm1$. Equation \eqref{sBeq} 
was derived  in \cite{cmv} to justify the original Boussinesq equation as a physical modeling of water waves in the shallow fluid layers,  and also in nonlinear atomic chains.  In addition, Maugin in \cite{maugin}  proposed \eqref{sBeq} to model  the nonlinear lattice dynamics in elastic crystals (see also \cite{daripahu}).

The initial value problem associated with \eqref{sBeq} with $u(x,0)=f(x)$ and $u_t(x,0)=g(x)$, and the general nonlinearity $\pm|u|^pu$ with $0<p<4$, was first studied in \cite{efw}. It was proved     the local well-posedness  of \eqref{sBeq} in
$L^2(\R)\times\dot{H}^{-3}(\R)$ by applying the
Strichartz-type  estimates. They also extended their results to $H^{1}(\R)\times L^2(\R)$ for the general case $p>0$. This in turn grants the global solution in $H^2(\R)$ with the defocusing nonlinearity $|u|^pu$, and the  focusing case $-|u|^pu$ with small initial data, by using the energy conservation of \eqref{sBeq}. Recently, Geba and Witz \cite{gebawitz} generalized the result of \cite{we2014} with the nonlinearity $|u|^2u$ into the case $|u|^{2p}u$ with $p\in\mathbb{N}$ and show the global well-posednes of \eqref{sBeq} in $H^s(\R)\times H^{s-2}(\R)$ with $2-2/(2p)<s<2$.
 Taking into account the initial data  $u(x,0)=f(x)$ and $u_t(x,0)=g(x)$, it was proved in \cite{esfahanifarah} by   using the suitable Bourgain-type space  that \eqref{sBeq} is locally well-posed in $H^s(\R)\times H^{s-2}(\R)$ for $s>-1/2$.  The local well-posedness results to \eqref{sBeq} were improved to $s>-3/4$ in \cite{wangesfahani} by the $[k;Z]$-multiplier norm method \cite{tao}. This is somehow natural well-posedness index due to presence of sixth-order derivative. Indeed, ignoring the fourth-order term of \eqref{sBeq}, the linearized equation can be decomposed into  $$u_{tt}-u_{xxxxxx}=(\partial_t-\partial_{xxx})(\partial_t+\partial_{xxx})u,$$ which can be considered as a coupled linear KdV-type equations. Replacing the sixth-order term with $ \varepsilon u_{xxxxxx}$, it was also showed in \cite{we2021} that the local solution  of \eqref{sBeq} with $\beta=1$ converges to that of \eqref{good-bouss} in $C([0,T];H^s(\R))$ with $s\geq0$ as $\varepsilon\to0$. 

Our interest here is in investigating the high-dimensional   case of    \eqref{sBeq}, that is,
\begin{equation}\label{nsB}
\begin{cases}
		u_{tt}=\Delta (u-\beta\Delta u+\Delta^2u+f(u)),  \\
		u(x,0)=u_0(x),\quad u_t(x,0)=u_1(x),\qquad x\in\R^n,
\end{cases}
\end{equation}
where we focus on the   quadratic nonlinearity $f(u)=\pm u^2$ and the cubic one $f(u)=\pm u^3$. As far as the authors know, this problem has not yet been studied. The only well-posedness result related to \eqref{nsB} was reported in \cite{wl}, where the initial value problem \eqref{nsB} was investigated  in the framework of modulation spaces. 

 Coming back to the good Boussinesq equation \eqref{good-bouss} in $\R^n$,   Geba and Witz in \cite{gebawitz2020} proved in the cases $n=2,3$ with the quadratic nonlinearity $\pm u^2$ that \eqref{good-bouss} is locally well-posed for $(u, u_t)|_{t=0}\in H^s(\R^n)\times H^{s-2}(\R^n)$ with $s>-1/4$. Indeed,  by an argument inspired by an approach due to Kishimoto and Tsugawa \cite{kist}, they  reformulated the associated initial value problem as the Cauchy problem for a nonlinear Schr\"{o}dinger equation with a nonlocal nonlinearity and initial data in $H^s(\R^n)$, and proved the associated bilinear estimates in a Bourgain space by using Tao's methodology in \cite{tao}. Relying on the Strichartz-type estimates for
the Schr\"{o}dinger group, Farah  in  \cite{farah} rewrited the good Boussinesq equation in the form of
\[
u_{tt}+\Delta^2u=\Delta(u+f(u))
\] 
and showed the local (unconditional) well-posedness with data in $(u, u_t)|_{t=0}\in H^s(\R^n)\times H^{s-2}(\R^n)$, $s\geq0$, under some assumptions on the nonlinearity. We should note that this approach is not appropriate for \eqref{sBeq} due to the presence of triharmonic operator.

To study \eqref{nsB}, we are going to derive dispersive estimates which enable us to find lower Sobolev index of well-posedness. As usual, these estimates are tightly related to the phase function  of \eqref{nsB}, that is, $$m(\xi)=\sqrt{|\xi|^2+\beta|\xi|^4+|\xi|^6}.$$ 
Contrary to the one-dimensional case, it is not easy to derive uniform estimates directly. In the case $n=1$ with $\beta>-2$,  the Van der Corput lemma was applied in \cite[Lemma 4.3]{esf-leva} to derive the following uniform estimates  

	\begin{equation}\label{osci-int}
	\sup_{x\in\R}\left|\int_{\R} e^{i (t m(\xi) +x\xi ) }  d  \xi\right|\lesssim
\begin{cases}
			|t|^{-\frac13},&\beta\neq0 \\
			|t|^{-\frac13}+|t|^{-\frac12},&\beta=0 
		\end{cases}
	\end{equation}
 for all $t\neq0$. In the case $n\geq2$, we take advantage of the symbol $m$ being radial to reduce the problem  to an oscillatory integral in  one dimension by using the Bessel function. Such a situation was already investigated in several papers. Among them, we refer to techniques developed by Cho and Ozawa \cite{choozawa} and Guo, Peng, and Wang \cite{gpw}, where  a general radial (nonhomogeneous) phase function was considered (see also \cite{cholee}). But one of the principal conditions in these articles is that the radial derivative of the associated phase function should not be  sign-changing. This condition fails to be valid for the phase function of \eqref{nsB} when $\beta=-1$. We overcome this difficulty via frequency localization by separating the high and low frequencies, and find the decay rate $t^{-\frac n2}$ for the localized $n$-dimensional version of \eqref{osci-int} (see Lemma \ref{lm-dispest}). Next, we  can suitably derive   decay estimates, by using the dyadic decomposition and  decay properties of the Bessel function. Since our results hold in the one-dimensional case, it is worth noting that our dispersive estimates improve locally ones obtained in \cite{esf-leva}.

\vspace{2mm}
Our first result is as follows.
\begin{theorem}\label{local-theorem-quad}
	Let $n\ge 2$, $\beta=\pm1$  and $f(u)=\pm u^2$. The initial value problem \eqref{nsB} is locally well-posed in  $H^s(\R^n)\times H^{s-3}(\R^n)$ for 
	$$
s >  \begin{cases}
 (n-3)/4, \qquad  & for \quad 2\le n \le 6,
\\
 (n-5)/2, \qquad   & for \quad  n \ge 7.
\end{cases}
$$
\end{theorem}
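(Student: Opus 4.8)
The plan is to recast \eqref{nsB} as an integral equation and solve it by a contraction argument built on the dispersive bound of Lemma~\ref{lm-dispest} and the Strichartz estimates it yields. First I would record that $m(\xi)^2=|\xi|^2+\beta|\xi|^4+|\xi|^6>0$ for $\xi\ne0$ --- when $\beta=-1$ this equals $|\xi|^2\big((|\xi|^2-\tfrac12)^2+\tfrac34\big)$ --- so the linear evolution is $\cos(tm(D))u_0+\frac{\sin(tm(D))}{m(D)}u_1$ and \eqref{nsB} is equivalent to the fixed-point equation $u=\Phi(u)$, where
\[
\Phi(u)(t):=\cos(tm(D))u_0+\frac{\sin(tm(D))}{m(D)}u_1\pm\int_0^t\frac{\sin((t-t')m(D))}{m(D)}\,\Delta(u^2)(t')\,dt'.
\]
Two structural facts are worth isolating: since $n\ge2$, the factor $m(\xi)^{-1}\sim|\xi|^{-1}$ near $\xi=0$ is harmless (no homogeneous space is needed), while $m(\xi)^{-1}\sim|\xi|^{-3}$ as $|\xi|\to\infty$ accounts for the loss of three derivatives on $u_1$; and $\frac{|\xi|^2}{m(\xi)}$ is a bounded multiplier that decays like $|\xi|^{-1}$ at high frequency, so the Duhamel operator $\frac{\sin((t-t')m(D))}{m(D)}\Delta$ is smoothing of order one on high frequencies. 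A fixed point in $C([0,T];H^s)$ then automatically satisfies $u_t\in C([0,T];H^{s-3})$.

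Next I would convert the $|t|^{-n/2}$ decay of the frequency-localized propagators given by Lemma~\ref{lm-dispest}, whose constants carry the $N$-dependence forced by the nonhomogeneity of $m$, into Strichartz estimates via a $TT^*$ argument, the Christ--Kiselev lemma, and a dyadic summation in $N$ against the $H^s$ weights. This yields the homogeneous bound
\[
\Big\|\cos(tm(D))u_0+\tfrac{\sin(tm(D))}{m(D)}u_1\Big\|_{L^q_TL^r_x}\lesssim\|u_0\|_{H^s}+\|u_1\|_{H^{s-3}}
\]
for the admissible pairs $(q,r)$ available in dimension $n$, together with the corresponding inhomogeneous estimate for the Duhamel term, in which I would additionally bank the one-derivative gain of $\frac{\Delta}{m(D)}$; alongside it one has the trivial energy bound, valid since $|\cos|,|\sin|\le1$ and $\frac{|\xi|^2}{m(\xi)}\lesssim1$.

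I would then set up the contraction in a space $X_T=C([0,T];H^s)\cap L^q_TL^r_x$ built on the Strichartz pair that is extremal in dimension $n$, apply the inhomogeneous Strichartz estimate to the Duhamel term, and use H\"older in $x$ together with Sobolev embedding to reduce the control of $\Delta(u^2)$ to a bilinear estimate of the schematic form $\|uv\|_{(\mathrm{dual})}\lesssim\|u\|_{X_T}\|v\|_{X_T}$. The thresholds $s>(n-3)/4$ for $2\le n\le6$ and $s>(n-5)/2$ for $n\ge7$ are precisely those for which this bilinear estimate closes, the switch at $n=7$ reflecting which admissibility/embedding constraint becomes binding. A positive power $T^\theta$ is picked up from H\"older in time, so for $T$ small depending on $\|u_0\|_{H^s}+\|u_1\|_{H^{s-3}}$ the map $\Phi$ contracts a ball of $X_T$; its fixed point is the local solution, and uniqueness, persistence of higher regularity, and continuous dependence on the data follow from the same estimates by standard arguments.

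The hard part is the nonlinear step: reaching the stated values of $s$ --- negative for $n=2$ --- requires using the full strength of the dispersive machinery of Lemma~\ref{lm-dispest}, including the high-frequency smoothing concealed in $\frac{\Delta}{m(D)}$, and identifying the extremal Strichartz pair in each range of $n$, which is exactly what produces the two-case description of the admissible $s$. A secondary point needing care is the interaction between the low-frequency singularity of $m(D)^{-1}$ and the derivative $\Delta$ in the nonlinearity, which stays under control only because $n\ge2$.
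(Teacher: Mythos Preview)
Your overall strategy---dispersive bound $\Rightarrow$ Strichartz $\Rightarrow$ bilinear estimate $\Rightarrow$ contraction---matches the paper's, but the framework is different and there is a genuine gap in the nonlinear step.

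The paper does not run the contraction in a Strichartz-type space $C_TH^s\cap L^q_TL^r_x$; it works in the Bourgain space $X^{s,b}$ with $\tfrac12<b<1$, and the whole proof reduces (via the standard $X^{s,b}$ linear estimates and \eqref{TFactor}) to the bilinear bound
\[
\bignorm{\,|D|\angles{D}^{-2}(u_1u_2)\,}_{L^2_TH^s_x}\lesssim\norm{u_1}_{X^{s,b}}\norm{u_2}_{X^{s,b}}
\]
of Lemma~\ref{lm-bilest}. The reason $X^{s,b}$ sits on the right is that, through the transfer principle \eqref{Str-transfer}, each dyadic piece $P_\lambda u_j$ may be placed in \emph{whichever} Strichartz or energy norm is optimal for that particular frequency interaction. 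A single fixed ``extremal Strichartz pair'' cannot serve this purpose.

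More concretely, ``H\"older in $x$ together with Sobolev embedding'' will not close the bilinear estimate at the stated thresholds. For $n=2$ the claimed range is $s>-1/4$, so $s$ may be negative, and a product of two $H^s$ (or $W^{s,r}$) functions with $s<0$ is simply not controlled by H\"older and Sobolev embedding. What the paper actually does (Lemma~\ref{L2lemma} and Section~\ref{sec-pfQ}) is a Littlewood--Paley paraproduct decomposition of $u_1u_2$: for each triple $(\lambda_1,\lambda_2,\lambda_3)$ with $\lambda_1\le\lambda_2$, the low-frequency factor goes into $L^\infty_TL^p_x$ via Bernstein and the high-frequency factor into $L^q_TL^r_x$ via \eqref{Str-transfer}, so that the gain $\lambda_2^{-1/q}$ from Lemma~\ref{lm-LocStr} is harvested on top of the symbol factor $\lambda_3\angles{\lambda_3}^{-2}$. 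The dyadic sums are then handled by a case split (high--high output versus low--high output, and $\lambda_j\lesssim1$ versus $\lambda_j\gg1$); the constraints $s>(n-3)/4$ and $s>(n-5)/2$ arise from different branches of this analysis, and the changeover at $n=7$ is just where their maximum flips. Your proposal names the right ingredients but skips this decomposition entirely, and without it the thresholds---in particular the negative-regularity one for $n=2$---cannot be reached.
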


\begin{remark}
	Following the proof of Theorem \ref{local-theorem-quad} in the one-dimensional case (see Lemma \ref{lm-bilest}), one can observe that this theorem is also valid for $n=1$ with $s\geq-1/8$. The  gap between the Sobolev index of well-posedness in  Theorem \ref{local-theorem-quad} and result of \cite{wangesfahani}  suggests that our results may be improved in the case $n\geq2$. We will treat this in the future.
\end{remark}

We also derive appropriate localized Strichartz estimate to extend Theorem \ref{local-theorem-quad} to the cubic case. The following theorem improve the results obtained in \cite{efw}.
\begin{theorem}\label{local-theorem-cubic}
	Let  $n\ge 1$, $\beta=\pm1$  and $f(u)=\pm u^3$. The initial value problem \eqref{nsB} is locally well-posed in  $H^s(\R^n)\times H^{s-3}(\R^n)$ for
	$$
s >  \begin{cases}
 -1/6, \qquad  & for \quad  n=1,
\\
 (2n-3)/6, \qquad  & for \quad 2\le n \le 4,
\\
 (2n-5)/4, \qquad   & for \quad  n \ge 5.
\end{cases}
$$
	\end{theorem}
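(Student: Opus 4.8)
The plan is to solve \eqref{nsB} by a contraction mapping argument in a Strichartz-type space built on the dispersive bound of Lemma~\ref{lm-dispest}. Writing $m(D)$ for the Fourier multiplier with symbol $m(\xi)=\sqrt{|\xi|^2+\beta|\xi|^4+|\xi|^6}$, the Cauchy problem \eqref{nsB} is, after taking the Fourier transform and solving the second-order ODE in $t$, equivalent to the Duhamel formula
\begin{equation*}
u(t)=\cos\!\big(t\,m(D)\big)u_0+\frac{\sin\!\big(t\,m(D)\big)}{m(D)}u_1-\int_0^t \frac{\sin\!\big((t-t')m(D)\big)}{m(D)}\,|D|^2\, f\big(u(t')\big)\,dt',
\end{equation*}
with $f(u)=\pm u^3$, and similarly for $u_t$ with three fewer derivatives. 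The structural point that makes the data class $H^s\times H^{s-3}$ natural is that the multiplier $|D|^2/m(D)$ appearing in front of the nonlinearity satisfies $|\xi|^2/m(\xi)\lesssim\langle\xi\rangle^{-1}$ for $|\xi|\gtrsim1$ and $|\xi|^2/m(\xi)\lesssim|\xi|$ for $|\xi|\lesssim1$; thus it actually \emph{smooths} by one derivative at high frequency and is harmless at low frequency, and the $|D|^2$ from writing $\Delta f(u)$ can be absorbed.

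Next I would upgrade Lemma~\ref{lm-dispest} to frequency-localized Strichartz estimates via a $TT^\ast$ argument: interpolating the $t^{-n/2}$ decay of the dyadically localized propagator with the trivial $L^2$ bound and dualizing gives, for each Littlewood--Paley piece, an estimate of the form $\|(\text{propagator})\,P_N g\|_{L^q_t L^r_x}\lesssim N^{\sigma(q,r)}\|P_N g\|_{L^2}$ over an admissible range of pairs $(q,r)$. Summing these over $N$ with a Littlewood--Paley decomposition costs an $\varepsilon$ loss of derivatives, which is exactly why the Sobolev thresholds in the statement are strict inequalities. I would take the iteration space $X_T$ to be $C([0,T];H^s(\R^n))$ intersected with the relevant $L^q_t([0,T];W^{s,r}_x)$ Strichartz norms (plus their $u_t$-counterparts at three lower derivatives), choosing $(q,r)$ according to $n$: for $2\le n\le 4$ one can afford a genuinely dispersive admissible pair, whereas for $n\ge 5$ the available admissible range forces a partial reliance on the energy estimate $\|\,\cdot\,\|_{L^\infty_tL^2_x}$ together with Sobolev embedding, which accounts for the jump to the threshold $(2n-5)/4$; the one-dimensional case $s>-1/6$ instead uses the weaker $|t|^{-1/3}$ decay recorded in \eqref{osci-int}. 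The case $\beta=-1$, where the radial derivative of $m$ changes sign, enters only through Lemma~\ref{lm-dispest} and needs no further attention here.

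The core of the argument is then the trilinear estimate controlling the Duhamel term, namely
\begin{equation*}
\Big\|\int_0^t\frac{\sin\!\big((t-t')m(D)\big)}{m(D)}\,|D|^2\big(v_1v_2v_3\big)(t')\,dt'\Big\|_{X_T}\lesssim T^{\theta}\prod_{j=1}^3\|v_j\|_{X_T}
\end{equation*}
for some $\theta>0$. After inserting the $\langle D\rangle^{-1}$ smoothing coming from $|D|^2/m(D)$ on the high-frequency output (and trivially bounding the low-frequency output), this reduces to estimating $\langle D\rangle^{s-1}(v_1v_2v_3)$ in a dual Strichartz norm; I would handle it by the fractional Leibniz rule to distribute $\langle D\rangle^{s-1}$ among the three factors, then Hölder in space and time, and finally the localized Strichartz bounds from the previous step, with the time integration producing the gain $T^\theta$ that allows $T$ to be chosen small. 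Verifying, dimension by dimension, that the derivative budget left over after the $|D|^2$, the Littlewood--Paley summation loss, and Sobolev embedding is still enough to run Hölder at admissible Strichartz pairs is the step I expect to be the main obstacle, and it is precisely what dictates the dimensional thresholds. Once this trilinear estimate and its obvious difference version are in hand, a standard contraction on a small ball of $X_T$, with $T$ depending on $\|u_0\|_{H^s}+\|u_1\|_{H^{s-3}}$, yields existence, uniqueness in $X_T$, and continuous dependence, completing the proof.
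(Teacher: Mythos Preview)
Your overall architecture---Duhamel, frequency-localized Strichartz estimates from Lemma~\ref{lm-dispest}, and a contraction in a Strichartz-type space---is close in spirit to the paper, which runs the contraction in the Bourgain space $X^{s,b}$ and reduces to the trilinear estimate of Lemma~\ref{lm-bilest-cubc} via the transfer principle \eqref{Str-transfer}. Working directly in $C_tH^s\cap L^q_tW^{s,r}$ instead of $X^{s,b}$ is a legitimate alternative here, since the paper never exploits modulation gains: its trilinear bound (Lemma~\ref{L2lemma-cub}) uses only linear Strichartz and Bernstein on each factor.

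However, two specific points would derail your argument at the stated thresholds. First, your claim that the case $n=1$ ``uses the weaker $|t|^{-1/3}$ decay recorded in \eqref{osci-int}'' is incorrect: the paper treats $n=1$ with the \emph{same} localized decay $(\lambda|t|)^{-1/2}$ from Lemma~\ref{lm-dispest}, which yields the $L^4_tL^\infty_x$ Strichartz estimate with a $\lambda^{-1/4}$ smoothing gain; this gain is exactly what produces the threshold $s>-1/6$. Using the unlocalized $|t|^{-1/3}$ rate would give a strictly worse range.

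Second, the plan to ``handle it by the fractional Leibniz rule to distribute $\langle D\rangle^{s-1}$ among the three factors, then H\"older'' is too coarse to reach the thresholds, and for $n=1$ (where $s$ may be negative) a Leibniz/Kato--Ponce step is not even available. The paper instead performs a full Littlewood--Paley paraproduct decomposition of the output and the three inputs and carries out a case analysis on the relative sizes of $\lambda_{\min},\lambda_{\mathrm{med}},\lambda_{\max}$ and the output frequency $\lambda_4$ (Section~\ref{sec-pfC}). The crucial feature this captures---and that a single Leibniz--H\"older step misses---is that the multiplier $|D|\langle D\rangle^{-2}$ behaves differently on low and high output frequencies: for $\lambda_4\lesssim1$ it contributes a factor $\lambda_4$ that makes the low-output sum converge, while for $\lambda_4\gg1$ it gives $\lambda_4^{-1}$. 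The thresholds $(2n-3)/6$ and $(2n-5)/4$ emerge from which of these interaction regimes is dominant, not from a change of admissible Strichartz pair as you suggest. If you replace your Leibniz step by this dyadic case analysis (equivalently, prove the analogue of Lemma~\ref{L2lemma-cub} in your Strichartz norms), your framework should go through.
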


\begin{remark}\label{global-remark}
One can note that the local solutions obtained from \eqref{local-theorem-cubic} can be extended globally in time. More precisely,   by using the energy conservation of \eqref{nsB} with $f(u)=\pm u^3$,
\[
	E(u(t))=\frac12\int_{R^n}|(-\Delta)^{-\frac12}u_t|^2+|u|^2+\beta|\nabla u|^2+ |\Delta u|^2 d x\pm\frac14 \int_{R^n} u^4d x=E(u(0)),
\]
one can show the global well-posedness of \eqref{nsB} with the nonlinearity $f(u)=u^3$ in the energy space  $H^2(\R^n)\times \dot H^{-1}(\R^n)$    if    $n\leq 6$. This can be also extended to $f(u)=-u^3$ under suitable conditions on the size of initial data (see e.g. \cite{efw}).
\end{remark}

Our second aim in this paper is to study the persistence of spatial analyticity for the solutions of \eqref{nsB}. There are a lot of papers  looking at spatial analyticity. We refer the readers to \cite{bgk,gk,hayashi,selberg-0,selberg}  and references therein.	
Since   the Paley-Wiener Theorem characterizes the radius of analyticity
of a function by decay properties of its Fourier transform, so it is natural to take the initial data in the Gevrey space $G^{\sigma,s}(\R^n)$. This space is introduced by the norm
\[
\|f\|_{G^{\sigma,s}(\R^n)}= \norm{e^{\sigma|\xi|}\angles{\xi}^s \hat{f}(\xi)} _{L^2(\R^n)},
\]
where $\hat{f}$ is the Fourier transform of $f$ and $\angles{\cdot}=1+|\cdot|$. It is clear that $G^{0,s}(\R^n)=H^s(\R^n)$. A simple modification of the Paley-Wiener Theorem (see \cite{katz}) shows that any function $f\in G^{\sigma,s}(\R^n)$ with $\sigma>0$  can be   holomorphically  extended to  the  symmetric complex  strip
\[
S_\sigma=\left\{x+iy\in\C^n; \;|y|<\sigma\right\}.
\]
In this context,  the parameter $\sigma>0$  is referred as the uniform radius of analyticity of $f$. These spaces   were introduced by Foias and Temam \cite{foiastemam} to study the spatial
analyticity of solutions of the Navier-Stokes equations. Given an initial data  with a uniform radius of analyticity $\sigma_0$, it is interesting to know that whether the solution $u(t)$ with 
$t > 0$ also has a uniform radius of analyticity $\sigma = \sigma(t) > 0$. In the one-dimensional case of \eqref{nsB} with the quadratic nonlinearity, this problem was investigated in \cite{bfh} by using bilinear estimates in Bourgain type spaces associated with the Gevrey space. We, here, follow the method used in \cite{sel-tes}  in the context of the 1D Dirac-Klein-Gordon equations to find   a    lower bound on $\sigma(t)$ for the cubic nonlinearity $f(u)=u^3$. The argument is based on an approximate conservation law of \eqref{nsB}. We remark that    the energy 
$
E(u(t))
$
 is formally conserved by the flow of \eqref{nsB}. Using the ideas of \cite{dmt}, we introduce a modified Gevrey norm
\[
\|f\|_{H^{\sigma,s}(\R^n)}= \norm{\cosh(\sigma|\xi|)\angles{\xi}^s\hat{f}(\xi)} _{L^2(\R^n)}.
\]
More precisely, it is seen from the fact $\cosh(\sigma|\xi|)\sim e^{\sigma|\xi|}$,  the norms $\|\cdot\|_{H^{\sigma,s}(\R^n)}$ and $\|\cdot\|_{G^{\sigma,s}(\R^n)}$ are equivalent. The idea of definition of $H^{\sigma,s}(\R^n)$ is delicately connected to the decay rate of  exponential weight of $G^{\sigma,s}(\R^n)$-norm. In fact, it can be technically seen that the desired decay rate-in-time of the radius of analyticity $\sigma $ is obtained from the algebraic estimate
\[
e^{\sigma|\xi|}-1\leq(\sigma|\xi|)^\ell e^{\sigma|\xi|},\qquad\ell\in[0,1],
\]
 which could provide a decay rate of order $t^{-1/\ell}$ for $\ell\in(0,1]$. In the new space $H^{\sigma,s}(\R^n)$, if nonlinear estimates  of the approximate  conservation law can dissolve the weight $|\xi|^{2\ell}$, then the estimate
 \[
 \cosh(\sigma|\xi|)-1\leq(\sigma|\xi|)^{2\ell} \cosh(\sigma|\xi|),\qquad\ell\in[0,1] 
 \]
can be used to provide a decay rate of order $t^{-1/(2\ell)}$ for $\ell\in(0,1]$.

Now we state our second result.

\begin{theorem}\label{theo-1-anal}
	Let $n=1$ and $\sigma_0>0$. Given the initial data $(u_0,u_1)\in H^{\sigma_0,2}(\R)\times H^{\sigma_0,0}(\R)$, then the solution $u(t)$ of \eqref{nsB} for any $T>0$ satisfies
	\[
	u\in C([0,T];H^{\sigma_0,2}(\R))\cap C^1([0,T];H^{\sigma_0,0}(\R)),
	\]
	where $\sigma=\sigma(T)=\min \sett{\sigma_0,C T^{-\frac12}}$ with $C=C\left(\norm{u_0}_{H^{\sigma_0,2}(\R)}, \norm{u_1}_{H^{\sigma_0,0}(\R)}\right)$.
\end{theorem}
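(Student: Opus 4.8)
The plan is to combine the local well-posedness theory in the Gevrey scale with an \emph{approximate conservation law} for a Gevrey‑modified energy, iterated on short time slices of a common length that depends only on the data; this is the scheme of \cite{sel-tes,dmt}. The first step (local theory with a uniform lifespan) would start from the Duhamel formula for \eqref{nsB} with $n=1$, $f(u)=u^3$,
\[
u(t)=\cos(tm(D))u_0+\frac{\sin(tm(D))}{m(D)}\,u_1+\int_0^t\frac{\sin((t-s)m(D))}{m(D)}\,\partial_x^2\!\left(u(s)^3\right)\d s,
\]
and use that the multipliers $\cos(tm(\xi))$, $\sin(tm(\xi))/m(\xi)$, $\xi^2\sin(tm(\xi))/m(\xi)$ are uniformly bounded (indeed the latter two are smoothing, since $m(\xi)\gtrsim\abs{\xi}+\abs{\xi}^3$ while $\abs{\sin y/y}\le1$), that $H^{\sigma,2}(\R)=G^{\sigma,2}(\R)$ is a Banach algebra for every $\sigma\ge0$ by the subadditivity $\abs{\xi}\le\abs{\xi_1}+\abs{\xi_2}+\abs{\xi_3}$ on $\xi=\xi_1+\xi_2+\xi_3$, and that $\norm{\cdot}_{H^{\sigma,s}}$ is nondecreasing in $\sigma$. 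A contraction argument then yields, for each $\sigma\in[0,\sigma_0]$, a solution on $[0,\delta_0]$ with the \emph{same} $\delta_0=\delta_0(\norm{u_0}_{H^{\sigma_0,2}},\norm{u_1}_{H^{\sigma_0,0}})>0$.

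Next I would fix $\sigma\in(0,\sigma_0]$, set $U:=\cosh(\sigma|D|)u$ (which solves the linear part of \eqref{nsB} with forcing $\Delta\cosh(\sigma|D|)(u^3)$), and track the modified energy
\[
\mathcal E_\sigma(t):=\half\bignorm{(-\Delta)^{-\half}U_t(t)}_{L^2}^2+\half\innerprod{(1-\beta\Delta+\Delta^2)U(t)}{U(t)}+\frac14\int_{\R}U(t)^4\d x,
\]
which controls $\norm{u(t)}_{H^{\sigma,2}}^2$ (plus the velocity at the energy level and the quartic term), because $1+\beta\abs{\xi}^2+\abs{\xi}^4\ge\tfrac34$ and $\sim\angles{\xi}^4$ and $\cosh(\sigma\abs{\xi})\sim e^{\sigma\abs{\xi}}$. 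Since the linear terms cancel exactly as in the $\sigma=0$ energy identity, a direct computation gives $\frac{d}{dt}\mathcal E_\sigma(t)=\innerprod{U_t}{\,U^3-\cosh(\sigma|D|)(u^3)\,}$. The decisive point is the elementary identity, valid for $\xi=\xi_1+\xi_2+\xi_3$,
\[
\prod_{j=1}^3\cosh(\sigma\xi_j)-\cosh(\sigma\xi)=-\half\Big[\sinh(\sigma(\xi_2+\xi_3))\sinh(\sigma\xi_1)+\sinh(\sigma(\xi_1+\xi_3))\sinh(\sigma\xi_2)+\sinh(\sigma(\xi_1+\xi_2))\sinh(\sigma\xi_3)\Big],
\]
obtained by expanding $\cosh a\cosh b\cosh c$ and using $\cosh X-\cosh Y=-2\sinh\tfrac{X+Y}{2}\sinh\tfrac{Y-X}{2}$. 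Applying $\abs{\sinh y}\le\abs{y}\cosh y$ to \emph{both} sines and $\cosh(a+b)\le2\cosh a\cosh b$ yields the pointwise Fourier bound
\[
\Bigabs{\widehat{U^3-\cosh(\sigma|D|)(u^3)}(\xi)}\lesssim\sigma^2\int_{\xi_1+\xi_2+\xi_3=\xi}\big(\abs{\xi_1}\abs{\xi_2}+\abs{\xi_1}\abs{\xi_3}+\abs{\xi_2}\abs{\xi_3}\big)\prod_{j=1}^3\cosh(\sigma\xi_j)\abs{\hat u(\xi_j)}.
\]

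The right side is dominated by $\sigma^2$ times the Fourier transform of finitely many terms of the shape $W\,(\partial_xW)^2$, where $\widehat W:=\cosh(\sigma\cdot)\abs{\hat u}\ge0$ so that $\norm W_{H^s}=\norm u_{H^{\sigma,s}}$; hence, by $H^1(\R)\hookrightarrow L^\infty(\R)\cap L^4(\R)$, one gets the trilinear estimate $\bignorm{U^3-\cosh(\sigma|D|)(u^3)}_{L^2}\lesssim\sigma^2\norm u_{H^{\sigma,2}}^3$. Pairing with $U_t$, integrating over the consecutive slices $[0,\delta_0],[\delta_0,2\delta_0],\dots$ up to $T$, and summing, I expect the master inequality
\[
\sup_{[0,T]}\mathcal E_\sigma\ \le\ \mathcal E_\sigma(0)+C\,\sigma^2\,T\Big(\sup_{[0,T]}\mathcal E_\sigma\Big)^2.
\]
A continuity/bootstrap argument then forces $\sup_{[0,T]}\mathcal E_\sigma\le2\mathcal E_\sigma(0)$ as soon as $C\,\sigma^2\,T\,\mathcal E_\sigma(0)\le\tfrac14$, i.e. $\sigma\le C'T^{-1/2}$ with $C'=C'(\norm{u_0}_{H^{\sigma_0,2}},\norm{u_1}_{H^{\sigma_0,0}})$; together with $\sigma\le\sigma_0$ this gives $\sigma=\sigma(T)=\min\sett{\sigma_0,C'T^{-1/2}}$. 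In particular $\norm{u(t)}_{H^{\sigma,2}}$ stays bounded on $[0,T]$, so the local solution extends to $[0,T]$, and the claimed continuity of $u$ and $u_t$ follows from the Duhamel representation and the now-global control of $\norm{u}_{H^{\sigma,2}}$.

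The crux — and the only genuinely new point — is the gain $\sigma^2$ (rather than merely $\sigma$) in the nonlinear error, since $\sigma^2$ is exactly what converts the decay rate $T^{-1}$ into the claimed $T^{-1/2}$: this requires extracting a factor $\sigma\abs{\xi_j}$ from \emph{each} of the two hyperbolic sines in the identity above and then absorbing the two resulting frequency powers with the two derivatives that $H^{\sigma,2}$ affords in one space dimension, which is also why one works with the ``$\cosh$'' space (so that the exponential weights multiply cleanly and $\norm W_{H^s}=\norm u_{H^{\sigma,s}}$). The remaining points are routine: keeping the local lifespan $\delta_0$ uniform across the iteration (guaranteed by the a priori bound $\mathcal E_\sigma\le2\mathcal E_\sigma(0)$ maintained along the induction) and the low-frequency bookkeeping in $(-\Delta)^{-\frac12}$, handled as usual via the first-order-in-time reformulation and a density argument.
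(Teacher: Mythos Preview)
Your strategy is the paper's: local theory in the Gevrey scale, the modified energy $\mathcal E_\sigma$ for $U=\cosh(\sigma|D|)u$, a $\sigma^2$ gain in the nonlinear defect coming from the $\cosh$ structure, and a bootstrap over slices of uniform length. Your explicit $\sinh$ identity is a self-contained derivation of the inequality the paper quotes from \cite{dmt}, namely $\bigl|1-\cosh(\sigma|\xi|)\prod_j\sech(\sigma|\xi_j|)\bigr|\lesssim \sigma^2\sum_{j\ne k}|\xi_j||\xi_k|$; both routes give the same pointwise Fourier bound.

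One step needs tightening. Your energy controls $\bignorm{(-\Delta)^{-1/2}U_t}_{L^2}$, not $\norm{U_t}_{L^2}$, so the pairing $\innerprod{U_t}{U^3-\cosh(\sigma|D|)(u^3)}$ must be read as $\innerprod{|D|^{-1}U_t}{\,|D|\bigl(U^3-\cosh(\sigma|D|)(u^3)\bigr)}$. That forces you to estimate $\bignorm{|D|\bigl(U^3-\cosh(\sigma|D|)(u^3)\bigr)}_{L^2}$, not merely the $L^2$ norm you wrote. The fix is immediate in $n=1$: with the ordering $|\xi_1|\le|\xi_2|\le|\xi_3|$ one has $|\xi|\le 3|\xi_3|$, so after inserting the extra $|\xi|$ your Fourier bound is dominated by $\sigma^2\, W\cdot\partial_x W\cdot\partial_x^2 W$ in $L^2$, and $H^2(\R)\hookrightarrow W^{1,\infty}(\R)$ closes it; this is precisely the content of the paper's Lemma~\ref{nonlin-est}. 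Without this derivative shift your master inequality with $(\sup\mathcal E_\sigma)^2$ does not follow, since neither $\mathcal E_\sigma$ nor the local theory (which on each slice requires $u_t$-data in $H^{\sigma,0}$, while $\mathcal E_\sigma$ only returns it in $\dot H^{\sigma,-1}$) provides a uniform bound on $\norm{U_t}_{L^2}$ along the iteration.
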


We notice from Remark \ref{global-remark} that the global well-posedness of \eqref{nsB} in $H^{\sigma_0,2}(\R)\times H^{\sigma_0,0}(\R)$ is extracted  via  $H^{\sigma,2}(\R)\hookrightarrow H^{\sigma}(\R)$, $\sigma\geq0$.

In the rest of paper, we first find the decay estimates of the phase function of \eqref{nsB}, then some dispersive estimates and the corresponding Strichartz estimates are derived in Section \ref{section-decay}. The binlinear and thrilinear estimates are proved in Sections \ref{sec-pfQ} and \ref{sec-pfC}, respectively. The last section is devoted to the proof of Theorem \ref{theo-1-anal}.

\section{Decay Estimate}\label{section-decay}
In this section, we derive some estimates on the derivatives of   phase function of \eqref{nsB},
$$
m (r)=\sqrt{ r^2 +\beta r^4 + r^6 }  \qquad \qquad  ( r=|\xi| \ge 0 ).
$$
These estimates help us to find appropriate estimate of the associated oscillatory integral. 

First we observe some elementary properties of $m$.

\begin{lemma}\label{lm-mest}
Suppose that $m$ is given as above.
\begin{enumerate}[(i)]
	\item  There holds for any $r\geq0$ that
	\begin{equation}\label{m-est}
		m(r)  \sim r \angles{ r}^2.
	\end{equation}
\item If  $\beta=1$, then
\begin{align*}
	m'(r)\sim  \langle r\rangle ^2,\qquad 
	r^3 \langle r\rangle ^{-2}\lesssim m''(r)\lesssim r.
\end{align*}
\item If  $\beta=-1$ and $r\geq1$, then
\begin{align}\label{m1-est} 
m'(r)  \sim r ^2, 
\qquad
   m''(r) &\sim  r,
\end{align}
and
\begin{align}\label{mj-est} 
\left|m^{(j)}(r)\right|\underset{j}  \lesssim r^{3- j}  \qquad ( j \ge 3 ).
\end{align}
\end{enumerate}
\end{lemma}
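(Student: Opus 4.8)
The plan is to reduce all three parts to estimates on the polynomial $P(r):=m(r)^2=r^2+\beta r^4+r^6=r^2(1+\beta r^2+r^4)$ and its derivatives, using $m=\sqrt P$ (which is smooth on $(0,\infty)$ since $P>0$ there). For part (i), since $\beta=\pm1$ we have $r^2\le\frac12(1+r^4)$, so $\frac12(1+r^4)\le1+\beta r^2+r^4\le\frac32(1+r^4)$; as $1+r^4\sim(1+r^2)^2\sim\angles{r}^4$, it follows that $P(r)\sim r^2\angles{r}^4$, i.e.\ $m(r)\sim r\angles{r}^2$, which is \eqref{m-est}.

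For part (ii) ($\beta=1$) I would use that all the relevant polynomials have nonnegative coefficients, so the two-sided estimates hold on all of $[0,\infty)$. From $m'=\frac{P'}{2m}=\frac{r(1+2r^2+3r^4)}{m}$, $1+2r^2+3r^4\sim\angles{r}^4$, and part (i), one gets $m'(r)\sim\angles{r}^2$. Differentiating again, $m''=\frac{2PP''-(P')^2}{4Pm}$, and a direct expansion gives $2PP''-(P')^2=12r^4(1+4r^2+3r^4+2r^6)$ with $1+4r^2+3r^4+2r^6\sim\angles{r}^6$; since $4Pm\sim r^3\angles{r}^6$, we get $m''(r)\sim r$, which in particular lies between $r^3\angles{r}^{-2}$ and $r$ (as $r^3\angles{r}^{-2}\le r$).

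For part (iii) ($\beta=-1$, $r\ge1$) I would first note, by an easy induction, that for every $j\ge1$ the derivative $m^{(j)}$ is a finite linear combination of terms $P^{\frac12-k}\prod_{i=1}^{k}P^{(a_i)}$ with $1\le k\le j$, $a_i\ge1$, $a_1+\cdots+a_k=j$. On $r\ge1$ one has $|P^{(a)}(r)|\lesssim r^{6-a}$ and, from $r^4-r^2+1-\frac34 r^4=\frac14(r^2-2)^2\ge0$ together with $r^4-r^2+1\le2r^4$ ($r\ge1$), also $P(r)=r^2(r^4-r^2+1)\sim r^6$; since $\frac12-k<0$, each term of $m^{(j)}$ is then $\lesssim(r^6)^{\frac12-k}\prod_i r^{6-a_i}=r^{3-j}$, giving \eqref{mj-est} and the upper bounds $m'(r)\lesssim r^2$, $m''(r)\lesssim r$. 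For the lower bounds I would track the leading term: $P'(r)=2r(3r^4-2r^2+1)$ and $3r^4-2r^2+1\ge r^4$ (since $2r^4-2r^2+1>0$ always), so $P'(r)\sim r^5$ and $m'(r)=\frac{P'(r)}{2\sqrt{P(r)}}\sim r^2$; likewise $2P(r)P''(r)-(P'(r))^2=12r^4(2r^6-3r^4+4r^2-1)$, and setting $u=r^2$ the cubic $2u^3-3u^2+4u-1$ satisfies $2u^3-3u^2+4u-1\ge u^3$ for $u\ge1$ (its derivative $6u^2-6u+4$ is everywhere positive and its value at $u=1$ is $2$), so $2PP''-(P')^2\sim r^{10}$ and $m''(r)=\frac{2PP''-(P')^2}{4P^{3/2}}\sim r$.

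The point needing most care is the restriction $r\ge1$ in part (iii): for $\beta=-1$ one has $m''(r)=-3r+O(r^3)$ as $r\to0^+$ while $m''(1)>0$, so $m''$ genuinely changes sign on $(0,1)$ and the clean lower bound in \eqref{m1-est} cannot extend to small $r$ — this is precisely what later forces the separation of high and low frequencies in the dispersive estimates. A secondary nuisance is that the $r\ge1$ estimates cannot be read off term by term from $m''=(2PP''-(P')^2)/4P^{3/2}$, since the two polynomials in the numerator have the same degree; one has to compute the numerator explicitly (it factors, as above) and only then verify, using $r\ge1$, that it is comparable to $r^{10}$.
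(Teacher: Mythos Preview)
Your proposal is correct and follows the same direct-computation approach as the paper, which simply records the formulas
\[
m'(r)=\frac{r(1+2\beta r^2+3r^4)}{m(r)},\qquad
m''(r)=\frac{r\bigl(3\beta+(2\beta^2+10)r^2+9\beta r^4+6r^6\bigr)}{(1+\beta r^2+r^4)^{3/2}}
\]
and asserts that the estimates follow; your write-up is in fact considerably more complete, in particular for the higher-derivative bound \eqref{mj-est}, which the paper does not address explicitly. One small slip: in part (iii), the parenthetical justification of $2u^3-3u^2+4u-1\ge u^3$ for $u\ge1$ quotes the derivative of the original cubic rather than of the difference $q(u)=u^3-3u^2+4u-1$; the intended argument is $q(1)=1$ and $q'(u)=3(u-1)^2+1>0$, hence $q(u)\ge1$ on $[1,\infty)$.
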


\begin{proof}
These estimates follow from
\begin{align*}
m'(r) &= \frac{r \left[ 1+ 2 \beta r^2 + 3r^4\right]}{m(r)},
\\
m''(r) &=  \frac{ r \left(3\beta +(2\beta^2+10)r^2+9\beta r^4+6r^6\right)}{   (1	+\beta r^2 +r^4)^\frac32 }.
\end{align*}
\end{proof}

We now recall the Van der Corput Lemma (see \cite{stein}).

\begin{lemma}\label{lm-corput}
 Assume 
$g \in C^1(a, b)$, $\psi\in  C^2(a, b)$ and $|\psi''(r)|  \ge  A$ for all $r\in (a, b)$. Then 
\begin{align}
\label{corput} 
\Bigabs{\int_a^b e^{i t  \psi(r)}   g(r) \, dr}& \le C  (At)^{-1/2}  \left[ |g(b)| + \int_a^b |g'(r)| \, dr \right] ,
\end{align}
for some constant $C>0$ that is independent of $a$, $b$ and $t$.
\end{lemma}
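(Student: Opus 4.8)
The plan is to prove this classical estimate (Van der Corput's lemma) by first treating the amplitude‑free case $g\equiv 1$ with a constant that is \emph{uniform over all subintervals} of $(a,b)$, and then reducing a general $C^1$ amplitude to that case by the fundamental theorem of calculus together with Fubini's theorem.

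\textbf{Step 1 (normalization).} Replacing $\psi$ by $-\psi$ if necessary, which does not change the modulus of the integral, I may assume $\psi'' \ge A > 0$ throughout $(a,b)$. Then $\psi'$ is strictly increasing, so it has at most one zero; I denote by $c\in[a,b]$ this zero if it exists, and otherwise take $c=a$ when $\psi'>0$ on $(a,b)$ and $c=b$ when $\psi'<0$ on $(a,b)$.

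\textbf{Step 2 (the case $g\equiv 1$).} Fix a parameter $\delta>0$ to be optimized. On $I_\delta=(c-\delta,c+\delta)\cap(a,b)$ I bound the integral trivially by $2\delta$. The complement $(a,b)\setminus I_\delta$ consists of at most two intervals, on each of which $\psi'$ has constant sign and, by the mean value theorem together with $\psi''\ge A$, satisfies $|\psi'(r)|\ge A\delta$. On such an interval $(p,q)$ I integrate by parts,
\[
\int_p^q e^{it\psi(r)}\,dr = \left[\frac{e^{it\psi(r)}}{it\,\psi'(r)}\right]_p^q + \frac{1}{it}\int_p^q e^{it\psi(r)}\,\frac{\psi''(r)}{\psi'(r)^2}\,dr .
\]
Each boundary term is $\le \tfrac{1}{tA\delta}$; and since $\psi'$ is monotone, $\psi''/\psi'^2$ keeps a constant sign on $(p,q)$, so $\int_p^q |\psi''/\psi'^2|\,dr = |\psi'(q)^{-1}-\psi'(p)^{-1}| \le \tfrac{1}{A\delta}$. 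Hence the contribution away from $c$ is $\lesssim \tfrac{1}{tA\delta}$. Adding the two contributions and choosing $\delta=(At)^{-1/2}$ gives, for \emph{every} subinterval $(a',b')\subseteq(a,b)$ (the hypothesis $\psi''\ge A$ persists on it),
\[
\left|\int_{a'}^{b'} e^{it\psi(r)}\,dr\right| \le C_0\,(At)^{-1/2},
\]
with $C_0$ an absolute constant.

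\textbf{Step 3 (general $g$) and the main obstacle.} Writing $g(r)=g(b)-\int_r^b g'(s)\,ds$ and exchanging the order of integration by Fubini,
\[
\int_a^b e^{it\psi(r)}g(r)\,dr = g(b)\int_a^b e^{it\psi(r)}\,dr - \int_a^b g'(s)\left(\int_a^s e^{it\psi(r)}\,dr\right)ds ,
\]
and applying the uniform bound of Step 2 to each inner integral yields the assertion with $C=C_0$. The delicate point is Step 2: one must isolate the unique possible stationary point of $\psi$, balance the trivial estimate on a small neighbourhood of it against the integration‑by‑parts estimate on the complement, and exploit the monotonicity of $1/\psi'$ so that the error integral telescopes. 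Crucially, the lower bound $|\psi''|\ge A$ on the \emph{whole} interval is what makes the constant independent of the endpoints, which is precisely what is needed for the amplitude reduction in Step 3 to go through.
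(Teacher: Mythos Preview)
Your argument is correct and is precisely the classical proof of Van der Corput's lemma: reduce to $\psi''\ge A$, isolate the unique possible stationary point, balance the trivial $2\delta$ bound against the integration-by-parts bound $O((tA\delta)^{-1})$ with $\delta=(At)^{-1/2}$, and then handle a general amplitude by writing $g(r)=g(b)-\int_r^b g'$ and applying the uniform-in-subinterval estimate via Fubini.

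The paper does not give a proof of this lemma at all; it merely recalls it with the citation ``see \cite{stein}'' and uses it as a black box. Your write-up is essentially the argument one finds in Stein's \emph{Harmonic Analysis} (Chapter~VIII, \S1.2), so there is no divergence in approach to discuss---you have simply supplied the proof the paper chose to omit.
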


Lemma \ref{lm-corput} holds even if $\psi'(r)=0$ for some $r\in (a, b)$.
However, if $|\psi'(r)|> 0$ for all $r\in (a, b)$, one can   obtain the following lemma by using an integration by parts. 
\begin{lemma}[\cite{ddt}]\label{lm-corput1}
Suppose that $g \in C^\infty_0(a,b)$ and $\psi\in C^\infty (a, b)$ with $|\psi'(r)|> 0$ for all $r\in (a, b)$. If 
  \begin{equation}\label{dervbd}
\max_{a\le r\le b}|\partial_r^j g(r)| \leq A, \qquad  \max_{a\le r\le b}  \Bigabs{\partial_r^j\left( \frac 1{\psi'(r)} \right)} \leq B
  \end{equation}
  for all $0 \le j \le \ N\in \N_0$,
  then 
\begin{align}
\label{corput'} 
\Bigabs{\int_a^b e^{i t  \psi(r)}   g(r) \, dr}& \lesssim A B^N  |t|^{-N} .
\end{align}
\end{lemma}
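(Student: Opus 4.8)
The plan is to prove this by iterated integration by parts, rather than by Van der Corput; this is exactly the setting that the non-vanishing hypothesis $|\psi'(r)|>0$ on all of $(a,b)$ makes available, since it guarantees that $1/\psi'$ is a well-defined $C^\infty$ function there. Because $g$ has compact support in $(a,b)$, every boundary term that arises will vanish. First I would introduce the first-order differential operator
\[
Lh := -\frac{1}{it}\,\frac{d}{dr}\!\left(\frac{h}{\psi'}\right),
\]
built so as to undo the identity $e^{it\psi}=\tfrac{1}{it\psi'}\tfrac{d}{dr}e^{it\psi}$. A single integration by parts (with vanishing boundary terms, since $g\in C_0^\infty(a,b)$) gives $\int_a^b e^{it\psi}g\,dr=\int_a^b e^{it\psi}\,(Lg)\,dr$, and iterating $N$ times yields
\[
\int_a^b e^{it\psi(r)}\,g(r)\,dr = \int_a^b e^{it\psi(r)}\,(L^N g)(r)\,dr,
\]
so that, bounding the exponential by $1$,
\[
\Bigabs{\int_a^b e^{it\psi(r)}\,g(r)\,dr} \le \int_a^b \abs{(L^N g)(r)}\,dr.
\]

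Next I would estimate $L^N g$ pointwise. Writing $D=\tfrac{d}{dr}$ and letting $M$ denote multiplication by $1/\psi'$, we have $L=-\tfrac{1}{it}\,DM$, hence $L^N=(-it)^{-N}(DM)^N$. The key claim, proved by induction on $N$, is that $(DM)^N g$ is a finite sum, with a number of terms bounded by a constant $C_N$ depending only on $N$, of products of the form
\[
\Big(\textstyle\prod_{i=1}^{N} \partial_r^{j_i}\!\big(1/\psi'\big)\Big)\,\partial_r^{k} g,
\qquad 0\le j_i \le N,\quad 0\le k\le N.
\]
Indeed, applying $DM$ to such a product first multiplies by one further factor $1/\psi'$ and then, by the Leibniz rule, lets $D$ fall on exactly one of the resulting factors: this keeps the number of $1/\psi'$-type factors equal to $N{+}1$ and raises a single derivative order by one, so the structural form is preserved while the count of terms grows by at most a factor $N{+}2$. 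Invoking the hypotheses \eqref{dervbd}, every factor $\partial_r^{j_i}(1/\psi')$ is bounded by $B$ and $\partial_r^k g$ by $A$, whence each term is at most $A B^N$ and
\[
\abs{(L^N g)(r)} \le C_N\, |t|^{-N}\, A B^N .
\]

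Finally, since $g\in C_0^\infty(a,b)$ the function $L^N g$ is supported in a compact subinterval of $(a,b)$, so integrating the pointwise bound over that set contributes a factor at most $b-a$, which I absorb into the implicit constant together with $C_N$. Combining with the first display gives $\bigabs{\int_a^b e^{it\psi}g\,dr}\lesssim A B^N |t|^{-N}$, as claimed. The only delicate point is the combinatorial bookkeeping in the Leibniz expansion of $(DM)^N g$: one must verify that it never produces more than $N$ factors that are derivatives of $1/\psi'$ and that the total number of terms stays bounded by a constant depending solely on $N$, so that multiplying the $N$ bounds $B$ by the single bound $A$ reproduces exactly the asserted $AB^N$ dependence. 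Everything else is routine.
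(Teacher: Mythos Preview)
Your argument is correct and is exactly the integration-by-parts approach the paper points to: the paper does not give its own proof of this lemma but cites \cite{ddt} and remarks that, under the non-vanishing hypothesis on $\psi'$, the estimate follows ``by using an integration by parts.'' Your iteration of the operator $L=-\tfrac{1}{it}\,\partial_r\!\circ (1/\psi')$, the vanishing of boundary terms from $g\in C_0^\infty(a,b)$, and the Leibniz bookkeeping giving exactly $N$ factors of derivatives of $1/\psi'$ and one factor $\partial_r^k g$ are all in order, and the final absorption of $|b-a|$ (or, more precisely, the measure of $\supp g$) and the combinatorial constant $C_N$ into the implicit constant is precisely what the $\lesssim$ encodes.
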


\vspace{5mm}
\noindent \textbf{Notations}. 
We fix an even smooth function $\chi \in C_0^{\infty}(\mathbb R)$ such that
\begin{equation*}
 0 \le \chi \le 1, \quad
\chi_{|_{[-1,1]}}=1 \quad \mbox{and} \quad  \mbox{supp}(\chi)
\subset [-2,2]
\end{equation*}
 and set
$$
\rho(s)
=\chi\left(s\right)-\chi \left(2s\right).
 $$
 For a dyadic number
  $\lambda \in  2^\Z$,  we set $\rho_{\lambda}(s):=\rho\left(s/\lambda\right)$, and thus $\supp \rho_\lambda= 
\{ s\in \R: \lambda/ 2 \le |s| \le 2\lambda \}$. 
Now define the frequency projection $P_\lambda$ via
\begin{align*}
\widehat{P_{\lambda} f}(\xi)  = \rho_\lambda(|\xi|)\widehat { f}(\xi) .
 \end{align*}
We sometimes write $f_\lambda:=P_\lambda f $, so that
\[ f=\sum_{\lambda  } f_\lambda ,\]
where summations throughout the paper are done over dyadic numbers in $ 2^\Z$.

\subsection{Dispersive estimates} 
In this section, we derive some localized dispersive estimate which are main tools in obtaining Strichartz estimates. To do so,  we will find 
a  frequency localized  $L^1_x-L^{\infty}_x$ decay estimate for the linear propagator associated to \eqref{nsB}, viz.
$$
\mathcal S_{m}(  t) := e^{ it m(D)},
$$
where the symbol $D$   presents the operator $i\nabla$.

\begin{lemma}[Localized dispersive estimate] \label{lm-dispest}
Let $n\ge 1$ and $\lambda\gg1  $.
Then
\begin{align}
\label{dispest}
\| \mathcal S_{m}( t)P_{\lambda} f  \|_{L^\infty_x(\R^n)} &\lesssim    ( \lambda |t|)^{-\frac n2}  \| f\|_{L_x^1(\R^n)} \end{align}
for all $f \in \mathcal{S}(\R^n)$.
 \end{lemma}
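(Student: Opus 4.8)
The plan is to bound the convolution kernel of $\mathcal S_m(t)P_\lambda$ and conclude by Young's inequality. Writing $\mathcal S_m(t)P_\lambda f=K_{\lambda,t}\ast f$ with
\[
K_{\lambda,t}(x)=c_n\int_{\R^n}e^{i\left(tm(|\xi|)+x\cdot\xi\right)}\rho_\lambda(|\xi|)\,d\xi,
\]
it suffices to prove $\norm{K_{\lambda,t}}_{L^\infty_x(\R^n)}\lesssim(\lambda|t|)^{-\frac n2}$. The trivial bound $\norm{K_{\lambda,t}}_{L^\infty_x}\lesssim\int_{\R^n}\rho_\lambda(|\xi|)\,d\xi\lesssim\lambda^n$ already gives the claim when $\lambda^3|t|\le1$, since then $\lambda^n\le(\lambda|t|)^{-n/2}$; so I may assume $\lambda^3|t|\ge1$.

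Next I would use that $\rho_\lambda$ is radial. For $n\ge2$ the Bessel representation of the Fourier transform of a radial function gives
\[
K_{\lambda,t}(x)=c_n|x|^{-\frac{n-2}{2}}\int_0^\infty e^{itm(r)}\rho_\lambda(r)\,J_{\frac{n-2}{2}}(r|x|)\,r^{\frac n2}\,dr ,
\]
while for $n=1$ one simply has $K_{\lambda,t}(x)=2\int_0^\infty e^{itm(r)}\rho_\lambda(r)\cos(rx)\,dr$. Rescaling $r=\lambda\sigma$ turns the integral over $r$ into one over $\sigma\in[1/2,2]$ with large parameter $T:=\lambda^3t$ ($|T|\ge1$), Bessel argument $R:=\lambda|x|$, and phase $q_\lambda(\sigma):=\lambda^{-3}m(\lambda\sigma)$; counting the powers of $r$ and $|x|$ shows that the prefactor becomes exactly $\lambda^nR^{-\frac{n-2}{2}}$ for $n\ge2$ (resp.\ $2\lambda$ for $n=1$). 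Since $\lambda\sigma\ge\lambda/2\gg1$ on $\supp\rho$, Lemma \ref{lm-mest} — together with the structure $m(r)=r^3\sqrt{1+\beta r^{-2}+r^{-4}}$ — shows that $q_\lambda\sim1$, $q_\lambda'\sim1$, $q_\lambda''\sim1$, and all higher $\sigma$-derivatives of $q_\lambda$ are bounded \emph{uniformly in $\lambda\gg1$} on $[1/2,2]$; this is precisely where restricting to $\lambda\gg1$ pays off, as it keeps us away from the zero of $m''$ that occurs when $\beta=-1$. After these reductions the lemma amounts to the estimate
\[
R^{-\frac{n-2}{2}}\bigabs{I(T,R)}\lesssim|T|^{-\frac n2}\qquad\text{uniformly in }R>0,\ |T|\ge1 ,
\]
where $I(T,R)=\int_{1/2}^{2}e^{iTq_\lambda(\sigma)}J_{\frac{n-2}{2}}(R\sigma)\,\sigma^{\frac n2}\rho(\sigma)\,d\sigma$ (read as the cosine integral when $n=1$).

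For $n=1$ this is immediate: splitting $\cos(R\sigma)$ into exponentials, each phase $Tq_\lambda(\sigma)\pm R\sigma$ has second derivative $Tq_\lambda''(\sigma)\sim T$, so the Van der Corput Lemma \ref{lm-corput} yields the bound $|T|^{-1/2}$, which is what is needed. For $n\ge2$ I would split on the size of $R$. When $R\lesssim1$, the factor $R^{-\frac{n-2}{2}}J_{\frac{n-2}{2}}(R\sigma)$ is a smooth amplitude with $\sigma$-derivatives bounded uniformly in $R\in(0,1]$, and since $q_\lambda'\sim1$ has no critical point, repeated integration by parts (Lemma \ref{lm-corput1} with $\psi=q_\lambda$) gives $\bigabs{I(T,R)}\lesssim_N R^{\frac{n-2}{2}}|T|^{-N}$ for every $N$, which more than suffices. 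When $R\gtrsim1$, I would insert the classical asymptotics $J_{\frac{n-2}{2}}(y)=y^{-1/2}\bigl(e^{iy}a_+(y)+e^{-iy}a_-(y)\bigr)$ with symbol bounds $|\partial_y^k a_\pm(y)|\lesssim_k y^{-k}$, so that $I(T,R)$ reduces to $R^{-1/2}$ times oscillatory integrals with phase $\phi_\pm(\sigma)=Tq_\lambda(\sigma)\pm R\sigma$ and amplitudes whose $\sigma$-derivatives are bounded uniformly in $R\ge1$. If $R/|T|$ stays away from the (uniformly bounded, uniformly positive) range of $q_\lambda'$, or the sign $\pm$ is non-resonant, then $|\phi_\pm'|\gtrsim\max(|T|,R)$ and integration by parts gives rapid decay; if $R\sim|T|$ and $\phi_\pm$ has its stationary point — unique since $q_\lambda''>0$ — inside $[1/2,2]$, then $|\phi_\pm''|\sim|T|$ and Van der Corput gives $\bigabs{I(T,R)}\lesssim R^{-1/2}|T|^{-1/2}$. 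Multiplying by $R^{-\frac{n-2}{2}}$ and using $R\sim|T|$ in the resonant regime, every case yields $R^{-\frac{n-2}{2}}\bigabs{I(T,R)}\lesssim|T|^{-n/2}$, hence $\norm{K_{\lambda,t}}_{L^\infty_x}\lesssim\lambda^n|T|^{-n/2}=(\lambda|t|)^{-n/2}$.

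The step I expect to be most delicate is the bookkeeping in the range $R\gtrsim1$: one must track the powers of $R$ produced by the prefactor $|x|^{-(n-2)/2}$, by the asymptotic expansion of $J_{(n-2)/2}$, and by the stationary-phase gain, and check that in the only genuinely dangerous region $R\sim|T|$ they combine to give precisely the exponent $-n/2$ in $T$ — the extra decay $R^{-(n-2)/2}\sim|T|^{-(n-2)/2}$ available there being exactly what makes it work. A secondary point is to make all the oscillatory-integral estimates uniform in $\lambda\gg1$, which is handled by the uniform $C^N$-bounds and uniform non-degeneracy of $q_\lambda$ recorded above; this is the concrete manifestation of how the frequency localization mentioned in the introduction circumvents the change of sign of $m''$ when $\beta=-1$.
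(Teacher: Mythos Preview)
Your proposal is correct and follows essentially the same route as the paper: reduction to the kernel, the Bessel-function representation, the split into small versus large $\lambda|x|$, integration by parts (Lemma~\ref{lm-corput1}) in the non-stationary regimes, and Van der Corput (Lemma~\ref{lm-corput}) in the stationary regime $|x|\sim\lambda^2 t$. Your rescaling to $T=\lambda^3 t$ and $q_\lambda(\sigma)=\lambda^{-3}m(\lambda\sigma)$ is a clean way to make the uniformity in $\lambda\gg1$ transparent, and your preliminary disposal of the range $\lambda^3|t|\le1$ by the trivial bound is a nice touch that sidesteps the need to take $N=n/2$ exactly (the paper applies Lemma~\ref{lm-corput1} with $N=n/2$, which for odd $n$ should really be read as $N=\lceil n/2\rceil$ together with $|T|\ge1$).
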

We can write
\[
\left[ \mathcal S_{m}(t) f_\lambda \right](x) 
= (I_{\lambda} (\cdot, t)\ast f)(x),
\]
where
\begin{equation}\label{Idef}
 I_{\lambda} (x, t)
=\lambda^n \int_{\R^n} e^{i \lambda x \cdot \xi+ it {m}(\lambda \xi)}  \rho(|\xi|) \, d\xi .
\end{equation}
By Young's inequality
\begin{equation}\label{younginq}
\| S_{m}(t) f_\lambda  \|_{L^\infty_x(\R^n)} \le \| I_{\lambda (\cdot, t)} \|_{L^\infty_x(\R^n)}  \|f\|_{L_x^1(\R^n)},
\end{equation}
and therefore \eqref{dispest} reduces to proving
\begin{equation}\label{diseEst-reduc}
\|I_{\lambda} (\cdot, t) \|_{L^\infty_x(\R^n)}  \lesssim      ( \lambda |t|)^{-\frac n2} .
\end{equation}

We provide the proof only for $n\ge 2$ and $\beta=-1$ as the one dimensional case or the case $\beta=1$ are easier to treat.

\subsection{Proof of \eqref{diseEst-reduc}}
We assume without loss of generality that $t>0$.
Using polar coordinates, we can write

\begin{equation}\label{I-eq}
I_{\lambda} (x, t) =\lambda^n  \int_{1/2}^2  e^{it m(\lambda r) }  (\lambda r|x|)^{-\frac{n-2}{2}}  J_{\frac{n-2}{2}}( \lambda r |x|)   r^{n-1}  \rho(r) \, dr,
\end{equation}
where $J_k(r)$ is the Bessel function:
$$
J_k(r)=\frac{ (r/2)^k}{(k+1/2) \sqrt{\pi}} \int_{-1}^1  e^{ir s} \left(1-s^2\right)^{k-1/2} \, ds \quad \text{for} \ k>-1/2.
$$
  The Bessel function $J_k(r)$ satisfies the following properties for $k>-1/2$ and $r>0$,
\begin{align}
\label{Jm1}
J_k (r) &\le Cr^{k} ,
\\
\label{Jm2}
J_k(r)& \le C r^{-1/2} ,
\\
\label{Jm3}
\partial_r \left[ r^{-k} J_k(r)\right] &= -r^{-k} J_{k+1}(r)
\end{align}
Moreover, we can write
\begin{equation}
\label{J0est}
 r^{- \frac{n-2}2 }J_{ \frac{n-2}2}(s)= e^{is} h(s)  +e^{-is}\bar h(s)
\end{equation}
for some function $h$ satisfying the decay estimate 
\begin{equation}
\label{h-est}
| \partial_r ^j h(r)|\le C_j \angles{r}^{-\frac{n-1}2-j}  \quad \text{for all} \ j\ge 0. 
\end{equation}

We use the short hand
$$ 
 m_{\lambda}(r) = m(\lambda r),  \qquad \tilde J_a(r)= r^{-a} J_a(r), \qquad \tilde\rho(r)=r^{n-1} \rho(r).$$

Hence,
\begin{equation}\label{I-eqq}
I_{\lambda} (x, t) = \lambda^n  \int_{1/2}^2  e^{it  m_{\lambda}(r)}  \tilde J_{\frac{n-2}{2}}( \lambda r |x|)  \tilde  \rho(r) \, dr.
\end{equation}

\vspace{2mm}

We treat the cases $  |x|\lesssim \lambda^{-1}$ and $  |x|\gg  \lambda^{-1}$ separately. 
\subsubsection{Case 1: $  |x|\lesssim  \lambda^{-1}$}
By \eqref{Jm1} and \eqref{Jm3} we have for all $ r\in (1/2, 2)$ the estimate

\begin{equation}
\label{J0derv-est}
\left| \partial_r ^j \left[  \tilde J_{ \frac{n-2}2 }( \lambda r |x|)  \tilde\rho(r) \right]\right| \underset{j}  \lesssim 1  \qquad  ( j \ge 0).
\end{equation}

It is straightforward to see from  Lemma \ref{lm-mest} that
\begin{equation}\label{mlamb-invest}
\max_{ 1/2 \le r \le 1 }\Bigabs {\partial_r ^j \left(  \frac 1{  m'_{ \lambda}(r)  } \right)} \underset{j}  \lesssim \lambda^{- 3}  \qquad   (j \ge 0).
\end{equation}

Applying Lemma \ref{lm-corput1} with \eqref{J0derv-est}-\eqref{mlamb-invest} and $N=  n/2$
to \eqref{I-eqq}, we obtain
\begin{equation}\label{Iest-2}
\begin{split}
| I_{\lambda} (x, t) |
&\lesssim \lambda^n \cdot \lambda^{- \frac {3n}2} t^{-   \frac n2}  \lesssim  (\lambda t)^{-   \frac n2} .
\end{split}
\end{equation}

\subsubsection{Case 2: $  |x|\gg  \lambda^{-1}$ }
Using \eqref{J0est} in \eqref{I-eq} we write
\begin{align*}
 I_{\lambda} (x, t) 
 &=\lambda^n \left\{\int_{1/2}^2  e^{it \phi^+_{\lambda} (r)  }  h(\lambda r |x|)  \tilde \rho(r) \, dr +  \int_{1/2}^2  e^{-it \phi^-_{\lambda} (r)  } \bar  h(\lambda r |x|)  \tilde\rho(r) \, dr \right\},
\end{align*}
where 
$$
\phi^\pm_{\lambda} (r)=    \lambda r|x|/t  \pm  m (_\lambda ( r) .
$$
Set $H_{\lambda}( |x|, r) =h(\lambda r |x|)  \tilde \rho(r)$. In view of \eqref{h-est} we have 
\begin{equation}
\label{Hest}
 \max_{1/2 \le r\le 2}\Bigabs {\partial_r ^j H_{\lambda}( |x|, r)  }    \lesssim   (\lambda |x|)^{-\frac{n-1}2}  \qquad  ( j \ge 0),
\end{equation}
 where we also used the fact $\lambda |x|\gg 1$

Now 
we write 
$$
 I_{\lambda} (x, t) 
= I^+_{\lambda} (x, t) 
+  I^-_{\lambda} (x, t) ,
$$
where
\begin{align*}
 I^+_{\lambda} (x, t) 
 &= \lambda^n \int_{1/2}^2  e^{it \phi^+_{\lambda} (r)  } H_{\lambda}( |x|, r)  \, dr ,
 \\
I^-_{\lambda} (x, t) &= \lambda^n
   \int_{1/2}^2  e^{-it \phi^-_{\lambda} (r)  }\bar H_{\lambda}( |x|, r)  \, dr .
\end{align*}

Observe that
$$
\partial_r \phi^\pm_{\lambda} (r)=  \lambda  \left[ |x|/t \pm  m_\lambda'( r) \right],\qquad \partial_r^2\phi^\pm_{\lambda} (r)=     \pm  \lambda^2 m_\lambda''( r),
$$
and hence by Lemma \ref{lm-mest}, 
\begin{equation}
\label{phi'+:est}
|\partial_r \phi^+_{\lambda} (r)|\gtrsim  \lambda^3
\qquad 
|\partial^2_r \phi^\pm_{\lambda} (r)| \sim   \lambda^3
\end{equation}
for all $ r\in (1/2, 2)$, where we also used the fact that $\lambda \gg 1$ and $m'$ is positive.

\subsubsection*{\underline{Estimate for  $I^+_{\lambda} (x, t)$ } }

Analogous to estimate \eqref{mlamb-invest}, we notice that
  \begin{equation}
\label{Est-phi+'-1}
\max_{1/2 \le r \le 2}\Bigabs {\partial_r ^j \left( \left[ \partial_r \phi_\lambda^+ (r) \right]^{-1} \right)}  \underset{j} \lesssim \lambda^{- 3}\qquad   (j \ge 0).
\end{equation} 
Applying Lemma \ref{lm-corput1} with \eqref{Hest}, \eqref{Est-phi+'-1}  and $N=  n/2$
to $I^+_{\lambda} (x, t) $ we obtain
\begin{equation}\label{Iest-3}
\begin{split}
| I^+_{\lambda} (x, t) |
&
\lesssim  \lambda^n  \cdot  (\lambda |x|)^{-\frac{n-1}2}  \cdot \lambda^{- \frac {3n}2}  t^{- \frac n2} 
\lesssim   (\lambda t)^{- \frac n2} ,
\end{split}
\end{equation}
where we also used the fact that  $\lambda |x|\gg 1$.

\subsubsection*{\underline{Estimate for $I^-_{\lambda} (x, t)$}}
We treat the the non-stationary and stationary cases separately. In the non-stationary
case, where 
$$ |x | \ll   \lambda^{2} t \quad \text{or} \quad |x| \gg   \lambda  ^{2} t, $$ we have 
$$
|\partial_r \phi^-_{\lambda} (r)|\gtrsim   \lambda ^{3},
$$
and hence $I^-_{\lambda} (x, t)$ can be estimated in exactly the same way as $I^+_{\lambda} (x, t)$ above, and satisfies the same bound.

So it remains to treat the stationary case: 
$$
|x | \sim  \lambda ^{2} t.
$$
 In this case,  
we use Lemma \ref{lm-corput}, \eqref{phi'+:est} and   \eqref{Hest} to obtain 
\begin{equation}\label{Iest-station}
\begin{split}
| I^-_{\lambda} (x, t) 
&\lesssim \lambda^n \left( \lambda^3  t \right)^{-\frac12}\left[ |H_\lambda^- (x, 2) |+ \int_{1/2}^2 | \partial_r  H_\lambda^- (x, r)| \, dr\right]
\\
&\lesssim   \lambda^{n-\frac 32 }  t^{-\frac12} \cdot (\lambda |x|)^{-\frac {n-1}2}
\\
& \lesssim     ( \lambda t)^{-\frac n2}.
\end{split}
\end{equation}
where we also used the fact that $H_\lambda^- (x, 2) =0$.

\subsection{Strichartz estimates}
Having the localized dispersive estimate in our hand, we are ready to derive the localized Strichartz estimates.

\begin{lemma}[Localized Strichartz estimates]\label{lm-LocStr}
 Let $\lambda\gg 1$, and
 assume that the pair $(q, r)$ satisfies
  \begin{equation} \label{admissible}
 q> 2, \ r\ge 2 \quad \text{and} \quad\frac2q +  \frac nr=\frac n2 \, .
 \end{equation} 
 Then
 \begin{align}
\label{Strest1d}
\norm{ \mathcal S_{m}( t) f_{\lambda}}_{ L^{q}_{t} L^{r}_{ x} (\R^{n+1}) } \lesssim \lambda ^{-\frac 1 q}  
\norm{  f_{\lambda}}_{ L^2_{ x}(\R^n )} ,
\end{align}
for all  $f \in \mathcal{S}(\R^n)$.
Moreover, if $b>1/2$, we have
\begin{equation}
\label{Str-transfer}
\norm{ u_\lambda }_{L^q_{t} L^r_{x}  (\R^{n+1}) } \lesssim 
\lambda^{-\frac 1q}
\norm{u_\lambda}_{  X^{0, b} },
\end{equation}
where \begin{align*}
\| u \|_{X^{s,b}} := \left\| \angles{ \xi }^{s} \angles{ |\tau| - m(\xi)}^b \widetilde{u} (\xi, \tau) \right\|_{L^2_{\tau, \xi}}.
\end{align*}
\end{lemma}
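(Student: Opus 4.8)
The plan is to prove the fixed-time decay estimate \eqref{Strest1d} by combining the localized dispersive bound \eqref{dispest} with the trivial $L^2$-conservation of the unitary propagator $\mathcal S_m(t)$, and then to run the standard $TT^\ast$ (Keel--Tao) argument. First I would record the two endpoint pieces: on one hand $\|\mathcal S_m(t) f_\lambda\|_{L^2_x} = \|f_\lambda\|_{L^2_x}$ since $m(D)$ is self-adjoint, and on the other hand \eqref{dispest} gives $\|\mathcal S_m(t) f_\lambda\|_{L^\infty_x} \lesssim (\lambda |t|)^{-n/2} \|f_\lambda\|_{L^1_x}$. Interpolating between these yields, for $2\le r\le\infty$ with conjugate exponent $r'$,
\[
\norm{\mathcal S_m(t) f_\lambda}_{L^r_x(\R^n)} \lesssim (\lambda|t|)^{-n(\frac12 - \frac1r)} \norm{f_\lambda}_{L^{r'}_x(\R^n)}.
\]
Feeding this into the abstract Keel--Tao machinery with decay exponent $\gamma = n(\frac12-\frac1r)$ and using the admissibility relation $\frac2q + \frac nr = \frac n2$ — note the scaling condition forces exactly $\gamma = 2/q$, so $\gamma<1$ is equivalent to $q>2$, and we stay strictly off the (possibly forbidden) endpoint — produces
\[
\norm{\mathcal S_m(t) f_\lambda}_{L^q_t L^r_x(\R^{n+1})} \lesssim \lambda^{-\gamma/2}\, \norm{f_\lambda}_{L^2_x(\R^n)} = \lambda^{-\frac1q}\,\norm{f_\lambda}_{L^2_x(\R^n)},
\]
which is precisely \eqref{Strest1d}. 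The power of $\lambda$ comes out correctly because the dispersive bound carries the factor $\lambda^{-n/2}$ alongside $|t|^{-n/2}$, and the $TT^\ast$ argument distributes half of the total $\lambda$-power, i.e. $\lambda^{-\gamma/2} = \lambda^{-1/q}$, onto the estimate.

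For the transfer estimate \eqref{Str-transfer}, the plan is the routine $X^{s,b}$-transference principle. Writing $\tilde u_\lambda(\xi,\tau)$ and inverting the Fourier transform in $\tau$, one expresses $u_\lambda(t)$ as a superposition over $s\in\R$ of modulated free evolutions $e^{i s(|\tau|-m(\xi))}\mathcal S_m(t)(\cdot)$; more cleanly, one uses the standard fact that for $b>1/2$ any $u_\lambda$ can be written as an average of translates of $\mathcal S_m(t)$ applied to $L^2_x$ data, with the average controlled by $\|u_\lambda\|_{X^{0,b}}$ because $\angles{|\tau|-m(\xi)}^{-b}\in L^2_\tau$ when $b>1/2$. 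Applying \eqref{Strest1d} inside the integral and using Minkowski's inequality in $L^q_t L^r_x$ then gives \eqref{Str-transfer}. (One should treat the two half-lines $\tau>0$ and $\tau<0$ separately so that $|\tau|$ is smooth, which causes no difficulty.)

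The main obstacle — and the only genuinely non-routine point — is verifying that the constant in the interpolated dispersive inequality, and hence the admissibility hypotheses of Keel--Tao, are met uniformly in $\lambda$; but this is automatic since \eqref{dispest} is already stated with a $\lambda$-uniform implicit constant and holds for every $t\neq 0$. A secondary technical care is that we only have $q>2$ rather than $q\ge2$: this is harmless because we never need the Keel--Tao endpoint, and indeed the frequency localization means the non-endpoint version suffices for the subsequent bilinear and trilinear estimates. Everything else is the textbook passage from a dispersive bound to Strichartz estimates and then to the $X^{s,b}$ version, so I would present those steps compactly and refer to the standard references for the $TT^\ast$ lemma and the transference principle.
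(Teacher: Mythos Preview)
Your proposal is correct and follows essentially the same route as the paper: both interpolate between the $L^2$ energy bound and the localized dispersive estimate \eqref{dispest}, then run the $TT^\ast$ argument (the paper writes this out explicitly via Hardy--Littlewood--Sobolev with $\gamma = 2/q$, while you invoke Keel--Tao as a black box, but the content is identical and the $\lambda^{-1/q}$ gain arises the same way). The paper does not actually prove the transference estimate \eqref{Str-transfer} in the body of the proof, so your sketch of the standard $X^{s,b}$ transference principle is a reasonable addition.
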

 \begin{proof}
  We shall use the Hardy-Littlewood-Sobolev inequality which
asserts that
\begin{equation}
\label{HLSineq}
\norm{|\cdot |^{-\gamma}\ast f}_{L^a(\R)} \lesssim \ \norm{ f}_{L^b(\R)} 
\end{equation}
whenever $1 < b < a < \infty$ and $0 < \gamma< 1$ obey the scaling condition
$$
\frac1b=\frac1a +1-\gamma.
$$
First note that \eqref{Strest1d} holds true for the pair $(q, r)=(\infty, 2)$ 
as this is just the energy inequality.  So we may assume $q\in(2, \infty)$.

Let $q'$ and $r'$ be the conjugates of $q$ and $r$, respectively, i.e., $q'=\frac q{q-1}$ and  $r'=\frac r{r-1}$.
 By the standard $TT^*$--argument, \eqref{Strest1d} is equivalent to the estimate 
\begin{equation}
\label{TTstar}
\norm{ TT^\ast F }_{L^{q}_{t} L^{r}_{ x} (\R^{n+1}) } \lesssim \ \lambda ^{-\frac 2q}  
\norm{ F  }_{ L^{q'}_{ t} L_x^{r'}(\R^{n+1} )},
\end{equation}
where 
\begin{equation}\label{TTastF}
\begin{split}
 TT^\ast F (x, t)&= \int_{\R^n}  \int_\R e^{i  x  \xi+  i(t-s) {m}(  \xi)}  \rho^2_\lambda (\xi)   \widehat{F}( \xi, s)\, ds  d\xi
 \\
 &= \int_\R  K_{\lambda,  t-s} \ast F( \cdot,  s) \, ds,
 \end{split}
\end{equation}
with
\begin{align*}
 K_{\lambda ,t}(x)&= \int_{\R^n}  e^{i  x  \xi+ it  {m}(  \xi)}  \rho^2_\lambda (\xi)   \, d\xi.
\end{align*}
Since $$K_{\lambda, t} \ast g (x)= \mathcal S_{m}(t)  P_\lambda g_\lambda (x)$$  
 it follows from \eqref{dispest} that
\begin{equation}\label{kest1}
\|K_{\lambda, t} \ast g \|_{L_x^{\infty}(\R^n)} \lesssim  (\lambda  |t|)^{-\frac n2}  \|g\|_{L_x^{1}(\R^n)}.
\end{equation}
On the other hand, we have by Plancherel 
\begin{equation}\label{kest2}
\|K_{\lambda, t} \ast g \|_{L_x^{2}(\R^n)} \lesssim    \|g\|_{L_x^{2}(\R^n)}.
\end{equation}
So interpolation between \eqref{kest1} and \eqref{kest2} yields
\begin{equation}\label{kest3}
\|K_{\lambda, t} \ast g \|_{L_x^{r}(\R^n)} \lesssim   (\lambda  |t|)^{- \frac n2\left(1-\frac 2r\right)} \|g\|_{L_x^{r'}(\R^n)}
\end{equation}
 for all $  r \in[2, \infty].$

Applying Minkowski's inequality to \eqref{TTastF}, and then  \eqref{kest3} and  \eqref{HLSineq}
with $(a, b)=(q , q' )$ and $\gamma= n/2-n/r=2/q$,
 we obtain
 \begin{align*}
\norm{TT^\ast F }_{L^{q}_{t} L^{r}_{ x} (\R^{n+1})}
&\le \norm{   \int_\R \norm{ K_{\lambda, t-s,} \ast
   F(s, \cdot) }_{L_x^r (\R^n)}  \, ds}_{L^{q}_t(\R)}
  \\
 &\lesssim  \lambda ^{ -\frac2q}   \norm{  \int_\R  |t-s|^{-\frac2q }
  \norm{ F(s, \cdot) }_{ L_x^{r'}(\R^2)}  \, ds }_{L_t^{q}(\R)}
   \\
 &\lesssim \lambda ^{ -\frac2q}   \norm{  
  \norm{ F }_{L_x^{r'}(\R^n) }  }_{L^{q'}_{ t} (\R)}
    \\
 &=  \lambda ^{ -\frac2q} 
  \norm{ F  }_{ L^{q'}_{ t} L_x^{r'}(\R^{n+1})} \, ,
\end{align*}
which is the desired estimate \eqref{TTstar}.
 \end{proof}

\section{Local solution }\label{Q-bilinear-section}
In this section, we prove a bilinear and trilinear estimate which is main ingredient of the proof of Theorem \ref{local-theorem-quad}. We give the proof only for $\beta=-1$ as the case $\beta=1$  easier to treat. With $\beta=-1$ the phase function becomes
$$
 m(\xi)=|\xi|\sqrt{1-|\xi|^2+|\xi|^4}.
$$

By Duhamel's principle, the solutions of \eqref{nsB} are rewritten as the integral equation
\begin{equation}\label{contract-op}
	u(t)=  \partial_t \mathcal W_m(t)u_0 + \mathcal   W_m(t)u_1
	+\int_0^t \mathcal  W_m(t-t')  \Delta( f(u)(t')) \; dt',
\end{equation}
where
\begin{align*}
\mathcal  W_m(t) &=\frac{\sin ( t  m(D) )} { m(D)}= \frac{e^{itm(D)} + e^{-itm(D)}  }{ 2im(D)} = \frac{ S_m(t) - S_m(-t)  }{ 2im(D)} .
\end{align*}

By the standard argument (see e.g. \cite{linares-ponce}), it is known that 
local well-posedness follows from
the following bilinear and trilinear estimates. So we   omit the proof of Theorem \ref{local-theorem-quad} (see \cite{esfahanifarah}).
\begin{lemma}
\label{lm-bilest}
Let $n\ge 2$, $1/2<b<1$ , $0 < T < 1$, and $s$ is as in Theorem \ref{local-theorem-quad}. Then
\begin{equation}
\label{biest1}
\norm{  |D| \angles{D}^{-2} (u_1 u_2) }_{X^{s, b-1}}  
\lesssim   T^{1-b}
\norm{u_1}_{ X^{s, b} } 
\norm{u_2}_{X^{s, b}} .
\end{equation}

\end{lemma}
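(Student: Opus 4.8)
The plan is to reduce the bilinear estimate \eqref{biest1} to a dyadic frequency-localized estimate and then sum over dyadic pieces. First I would perform a Littlewood--Paley decomposition: write $u_i = \sum_{\lambda_i} P_{\lambda_i} u_i$ and $u_1 u_2 = \sum_{\lambda_1,\lambda_2} (P_{\lambda_1} u_1)(P_{\lambda_2} u_2)$, and let $\mu$ denote the output frequency, which by convolution support considerations satisfies $\mu \lesssim \max(\lambda_1,\lambda_2)$. By symmetry assume $\lambda_1 \gtrsim \lambda_2$, so $\mu \lesssim \lambda_1$. The symbol $|D|\angles{D}^{-2}$ contributes a factor $\mu \angles{\mu}^{-2}$ on the output, which is harmless and in the worst (high output) regime gains $\mu^{-1}$; the point is that it never hurts. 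The standard way to handle the $X^{s,b}$--$X^{s,b-1}$ duality is: by the $TT^*$/duality characterization, \eqref{biest1} is equivalent to a trilinear $L^2_{t,x}$ estimate $\bigl| \int (P_\mu w)(P_{\lambda_1} u_1)(P_{\lambda_2} u_2) \bigr| \lesssim (\text{gains}) \, \norm{w}_{X^{0,1-b}} \norm{u_1}_{X^{s,b}} \norm{u_2}_{X^{s,b}}$, which by Cauchy--Schwarz and H\"older reduces to controlling $\norm{P_{\lambda_1} u_1 \cdot P_{\lambda_2} u_2}_{L^2_{t,x}}$ and then applying the transferred Strichartz bound \eqref{Str-transfer}.

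The core step is the bilinear $L^2_{t,x}$ estimate. For the two high-frequency factors (the generic case $\lambda_1 \sim \lambda_2 \gg 1$, which is the one that determines the Sobolev threshold) I would use H\"older in $t,x$ to split $\norm{u_{1,\lambda_1} u_{2,\lambda_2}}_{L^2_{t,x}} \le \norm{u_{1,\lambda_1}}_{L^{q}_t L^{r}_x} \norm{u_{2,\lambda_2}}_{L^{q'}_t L^{r'}_x}$ — but more efficiently I would use the diagonal Strichartz exponent: take $q=r$ determined by $\frac4q = \frac n2$, i.e.\ $q = r = 8/n$, which is admissible ($q>2 \iff n<4$; for $n\ge 4$ one must instead pair a Strichartz norm with an energy ($L^\infty_t L^2_x$) norm, which is exactly why the exponent in Theorem \ref{local-theorem-quad} changes at $n=6$, resp.\ the threshold in the trilinear case changes at $n=4,5$). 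Concretely, for $2\le n\le 6$ one uses two admissible Strichartz norms with $\frac2q+\frac nr = \frac n2$ chosen so that $\frac1q+\frac1{q} $ and $\frac1r+\frac1r$ land on $(1/2,1/2)$ after interpolating with the energy estimate; \eqref{Str-transfer} then gives each factor $\lesssim \lambda_i^{-1/q} \norm{u_{i,\lambda_i}}_{X^{0,b}}$, producing a net power $\lambda_1^{-\theta}$ for some $\theta>0$ depending on $n$. Accounting for the $\angles{\xi}^s$ weights: the output contributes $\mu \angles{\mu}^{-2}\angles{\mu}^s \lesssim \lambda_1^{s-1}$ and the inputs cost $\lambda_1^{-s}\lambda_2^{-s}$, so the dyadic sum converges provided $\lambda_1^{-s} \lambda_2^{-s} \lambda_1^{s-1} \lambda_1^{-\theta}$ (after one $\lambda_i$-summation using $b>1/2$ and the off-diagonal gains) carries a strictly negative total power of $\lambda_1$ together with a strictly negative power of $\lambda_2$ (or a power that can be absorbed by the $X^{s,b}$ norm); this is where the stated lower bounds on $s$ come from. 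The $T^{1-b}$ factor is extracted in the standard way: either restrict to $|t|<T$ and use $\norm{\cdot}_{X^{s,b-1}} \lesssim T^{1-b}\norm{\cdot}_{X^{s,b'-1}}$ for $b'>b$ via the time-localization lemma, or equivalently by interpolation between $b-1$ and slightly below; I would cite the standard reference for this (it is mentioned in the excerpt that local well-posedness follows ``by the standard argument'').

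The low-frequency and mixed regimes I would dispatch more crudely. When $\lambda_1 \sim \lambda_2 \lesssim 1$, all frequencies are $O(1)$, the operator $|D|\angles{D}^{-2}$ is bounded, and one only needs an $L^2_{t,x}$ product estimate which follows from $L^4_{t,x}$ Strichartz (valid in low dimensions) or, since there is no summation issue, from Sobolev embedding in $X^{s,b}$ with $b>1/2$; here one must be slightly careful because $|D|\angles{D}^{-2}$ does not gain regularity at low frequency but also does not lose any, and the $X^{s,b-1}$ norm at output frequency $\lesssim 1$ is controlled by $L^2_{t,x}$. When $\lambda_1 \gg \lambda_2$ (so $\mu \sim \lambda_1$), the same Strichartz argument works and the off-diagonal gap makes the $\lambda_2$-sum trivially convergent.

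The main obstacle I anticipate is the bookkeeping at the dimensional thresholds $n=6$ (quadratic) and $n=4,5$ (cubic): the diagonal Strichartz exponent $q=r=8/n$ ceases to be admissible ($q>2$ fails), so in high dimensions one is forced to use the endpoint-adjacent admissible pairs together with the energy inequality, and interpolating the pair of exponents $(q_1,r_1),(q_2,r_2)$ so that the H\"older sum of reciprocals is exactly $(1/2,1/2)$ while keeping both pairs strictly admissible requires choosing them asymmetrically; tracking the resulting powers of $\lambda_1,\lambda_2$ and verifying the sum converges exactly down to $s > (n-3)/4$ (resp.\ $(n-5)/2$, and the cubic analogues) is the delicate part. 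Also, verifying the output weight $|D|\angles{D}^{-2}$ genuinely helps rather than hurts in the high-output-frequency case — it gives a crucial extra $\lambda_1^{-1}$ — and that at low output frequency the singular factor $|D|$ causes no harm against the $L^2$-type output norm, needs a short separate check. I do not expect any step beyond these to be problematic; everything else is routine $X^{s,b}$ calculus.
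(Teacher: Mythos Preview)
Your overall strategy---Littlewood--Paley decomposition, duality to a trilinear form, a frequency-localized bilinear $L^2_{t,x}$ estimate driven by the Strichartz bound \eqref{Str-transfer}, then dyadic summation with a case split on the relative sizes of the frequencies---is exactly what the paper does in Section~\ref{sec-pfQ} via Lemma~\ref{L2lemma}. Two execution points are worth correcting, though. First, your diagonal choice $q=r=8/n$ is not an admissible pair for \eqref{admissible}; in fact the only dimension in which one can place \emph{both} factors in admissible Strichartz spaces whose H\"older product lands in $L^2_{t,x}$ (without Bernstein) is $n=2$, not $n<4$. The paper avoids this issue entirely by an asymmetric split: it applies Strichartz \eqref{Str-transfer} to only the \emph{high}-frequency factor with $q=\frac{2}{1-2\delta}$ (just above $2$), and controls the low-frequency factor by Bernstein plus the energy bound $L^\infty_T L^2_x\hookrightarrow X^{0,b}$, yielding $B(\lambda)\sim \lambda_{\min}^{n/2-1+2\delta}\lambda_{\max}^{-1/2+\delta}$ uniformly in $n\ge 2$; this single formula then drives the whole summation and produces the thresholds $s>(n-3)/4$ for $n\le 6$ and $s>(n-5)/2$ for $n\ge 7$. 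Second, the $T^{1-b}$ gain is obtained more directly than you suggest: the paper uses \eqref{TFactor}, i.e.\ $\|u\|_{X^{s,b-1}(T)}\lesssim T^{1-b}\|u\|_{L^2_T H^s_x}$, which immediately reduces the output norm to $L^2_T H^s_x$ and makes the dual test function live in $L^2_{T,x}$ rather than $X^{0,1-b}$. With these two adjustments your sketch becomes the paper's proof.
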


\vspace{2mm}

\begin{lemma}
\label{lm-bilest-cubc}
Let $n\ge 1$, $1/2<b<1$ , $0 < T < 1$ and $s$ be the same as in Theorem \ref{local-theorem-cubic}. Then
\begin{equation}
\label{biest1-c}
\norm{  |D| \angles{D}^{-2} (u_1 u_2 u_3) }_{X^{s, b-1}}  
\lesssim  \prod_{j=1}^3
\norm{u_j}_{ X^{s, b} } .
\end{equation}

\end{lemma}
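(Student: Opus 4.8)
The plan is to reduce the trilinear estimate to a quadratic one for the expression $|D|\angles{D}^{-2}(u_1u_2u_3)$ by duality and Plancherel, then to run a dyadic Littlewood–Paley decomposition of all four frequency variables and exploit the localized Strichartz estimate \eqref{Str-transfer}. First I would dualize: \eqref{biest1-c} is equivalent, after writing $\widetilde{u}_j$ and a test function $\widetilde{v}$, to a bound on the multilinear form
\[
\Lambda = \int \frac{|\xi|\angles{\xi}^{-2}\, \widehat{v}(\xi)\, \widehat{u_1u_2u_3}(\xi)}{\angles{\xi}^{-s}\angles{|\tau|-m(\xi)}^{1-b}}\,d\xi\,d\tau,
\]
and, passing to the spatial side, to bounding $\bigl|\int |D|\angles{D}^{2s-2} v \cdot u_1 u_2 u_3 \,dx\,dt\bigr|$ in terms of $\norm{v}_{X^{-s,1-b}}\prod\norm{u_j}_{X^{s,b}}$; here I use $\angles{\xi}^{-2}\le \angles{\xi}^{-2}$ absorbs $|\xi|$ so that effectively the derivative loss on $v$ is harmless. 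Next I would decompose $u_j = \sum_{\lambda_j} u_{j,\lambda_j}$ and $v=\sum_{\mu} v_\mu$, and reduce matters by symmetry to the case $\lambda_1\ge\lambda_2\ge\lambda_3$; the output frequency $\mu$ must then satisfy $\mu\lesssim\lambda_1$, and by the usual high-high/low interaction analysis either $\mu\sim\lambda_1$ (so the two largest inputs resonate with the output) or $\lambda_1\sim\lambda_2$.

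The core estimate is then a frequency-localized bound of the form
\[
\Bigabs{\int v_\mu\, u_{1,\lambda_1} u_{2,\lambda_2} u_{3,\lambda_3}\,dx\,dt}
\lesssim C(\mu,\lambda_1,\lambda_2,\lambda_3)\,\norm{v_\mu}_{X^{0,1-b}}\prod_{j=1}^3\norm{u_{j,\lambda_j}}_{X^{0,b}},
\]
obtained by Hölder in $(x,t)$ splitting the four factors into pairs $L^4_{t,x}$ or into $L^q_tL^r_x$ triples according to the dimension $n$, and then applying \eqref{Str-transfer} to each localized factor. Each application of \eqref{Str-transfer} gives a gain $\lambda_j^{-1/q}$ (and $\mu^{-1/q}$ on the dual factor), which one has to balance against the Sobolev weights $\angles{\lambda_j}^{-s}\angles{\mu}^{s}$ and, crucially, against the weight $|\mu|\angles{\mu}^{-2}$ — the latter provides two derivatives of smoothing at high output frequency and a half-derivative gain $|\mu|^1$ at low output frequency, which is exactly what compensates the loss from taking $|D|\angles{D}^{-2}$ outside. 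For $n=1$ one uses the endpoint-type pair coming from \eqref{osci-int} / the $|t|^{-1/3}$ decay (via the $X^{s,b}$ version) to afford $s>-1/6$; for $2\le n\le 4$ and $n\ge 5$ one uses the admissible pairs $(q,r)$ from \eqref{admissible} with $\frac2q+\frac nr=\frac n2$, optimizing the choice of $r$ against $n$, which produces the two thresholds $(2n-3)/6$ and $(2n-5)/4$ stated in Theorem \ref{local-theorem-cubic}. Finally one sums the dyadic pieces: the surplus powers of the dyadic parameters (strictly positive once $s$ is above threshold, together with the off-diagonal gain whenever $\mu\ll\lambda_1$ or $\lambda_3\ll\lambda_1$) give geometric series in $\lambda_2/\lambda_1,\lambda_3/\lambda_1,\mu/\lambda_1$, and one invokes Cauchy–Schwarz in the remaining free dyadic index to recover the $\ell^2$-based $X^{s,b}$ norms.

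The main obstacle I anticipate is the careful bookkeeping in the summation over dyadic blocks in the resonant region $\mu\sim\lambda_1\sim\lambda_2\ge\lambda_3$: there the naive Hölder/Strichartz estimate is essentially sharp and the surplus in $s$ is small, so one must extract the decay either from the $\angles{|\tau|-m(\xi)}^b$ weights (a modulation/"$b$-gain" argument, splitting into regions where one of the four modulations dominates, using the size of the resonance function $|\pm m(\xi_1)\pm m(\xi_2)\pm m(\xi_3)\mp m(\xi)|$) or from a slightly off-endpoint choice of Strichartz exponents. Establishing a usable lower bound on that resonance function for the phase $m(\xi)=|\xi|\sqrt{1-|\xi|^2+|\xi|^4}$, given that $m$ is neither homogeneous nor convex, is the delicate point; the estimates on $m',m''$ from Lemma \ref{lm-mest} are what make this tractable. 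A secondary nuisance is the low-frequency regime $\lambda_j\lesssim 1$ where the dispersive estimate \eqref{dispest} (valid for $\lambda\gg1$) does not apply and one must instead use Bernstein together with the crude $X^{s,b}\hookrightarrow L^\infty_tL^2_x$ and Sobolev embedding; since $|D|\angles{D}^{-2}$ is bounded there this causes no real loss but must be handled on its own.
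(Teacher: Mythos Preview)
Your overall architecture (duality, Littlewood--Paley decomposition of all four factors, localized Strichartz from Lemma~\ref{lm-LocStr}, then dyadic summation) is exactly what the paper does. However, you are about to make the argument substantially harder than necessary, and the ``main obstacle'' you anticipate is not real.

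The key simplification you are missing is the embedding \eqref{TFactor}, namely $\norm{u}_{X^{s,b-1}(T)}\lesssim T^{1-b}\norm{u}_{L^2_T H^s_x}$. The paper applies this \emph{before} dualizing, so that the left-hand side of \eqref{biest1-c} is replaced by an $L^2_T H^s_x$ norm and the dual function $u_4$ lives merely in $L^2_{T,x}$, not in $X^{0,1-b}$. Consequently there is no modulation weight $\angles{|\tau|-m(\xi)}$ anywhere in the reduced estimate, and no resonance-function lower bound for $|\pm m(\xi_1)\pm m(\xi_2)\pm m(\xi_3)\mp m(\xi)|$ is ever needed. The entire proof is pure H\"older + Bernstein + Strichartz (packaged as Lemma~\ref{L2lemma-cub}), followed by elementary bookkeeping of the powers of $\lambda_j$; the ``resonant'' case $\lambda_1\sim\lambda_2\sim\lambda_3$ is handled by exactly the same Strichartz estimate as every other case, with no extra input. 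Your plan to extract gain from the $b$-weights via a modulation decomposition would work in principle, but it is unnecessary machinery here and the nonconvexity of $m$ that worries you never enters.

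Two smaller corrections. First, for $n=1$ the paper does not use the $|t|^{-1/3}$ decay from \eqref{osci-int}; it uses the \emph{localized} estimate of Lemma~\ref{lm-dispest}, which gives $(\lambda|t|)^{-1/2}$ and hence the Strichartz pair $(q,r)=(4,\infty)$ with a $\lambda^{-1/4}$ gain --- this is what produces the threshold $s>-1/6$. Second, your comment that ``$\angles{\xi}^{-2}$ absorbs $|\xi|$ so that the derivative loss on $v$ is harmless'' is only true at high output frequency; at low output frequency $\mu\lesssim 1$ the factor $|\mu|\angles{\mu}^{-2}\sim\mu$ is what makes the sum over $\mu$ converge, and you should track it explicitly (the paper does, via the weight $\lambda_4\angles{\lambda_4}^{s-2}$ in $C(\lambda)$).
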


To prove Lemmas \ref{lm-bilest} and \ref{lm-bilest-cubc}, we need the localized bilinear and trilinear estimates in Lemmas \ref{L2lemma} and \ref{L2lemma-cub} below, which are consequences of the frequency localized Strichartz estimates in Lemma \ref{lm-LocStr}.

\vspace{2mm}
We recall the Bernstein inequality which is very helpful in our analysis.
\begin{lemma}[\cite{chemin}]
	For $1 \leq r \leq q \leq\infty$ and $k\geq0$,
	\[
	\|D^kP_\lambda f\|_{L^q(\R^n)}\lesssim \lambda^{k+n(\frac1r-\frac1q)}	\|P_\lambda f\|_{L^r(\R^n)}.
	\]
\end{lemma}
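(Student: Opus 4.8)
\textbf{Proof plan for the Bernstein inequality (Lemma on $\|D^k P_\lambda f\|_{L^q}$).}

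The plan is to reduce everything to a convolution estimate together with Young's inequality. First I would write the frequency projection as a convolution: since $\widehat{P_\lambda f}(\xi) = \rho_\lambda(|\xi|)\hat f(\xi)$ with $\rho_\lambda(|\xi|) = \rho(|\xi|/\lambda)$, one has $P_\lambda f = K_\lambda \ast f$ where $K_\lambda$ is the inverse Fourier transform of $\rho(|\cdot|/\lambda)$. Likewise, $D^k P_\lambda f = K_\lambda^{(k)} \ast f$ where $K_\lambda^{(k)}$ has Fourier transform $|\xi|^k \rho(|\xi|/\lambda)$ (here one should interpret $|\xi|^k$ cut off to the support of $\rho(\cdot/\lambda)$, so there is no issue at the origin). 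By scaling, $K_\lambda^{(k)}(x) = \lambda^{n+k} K_1^{(k)}(\lambda x)$ where $\widehat{K_1^{(k)}}(\xi) = |\xi|^k \rho(|\xi|)$ is a fixed smooth compactly supported function, hence $K_1^{(k)} \in \mathcal S(\R^n)$ and in particular $K_1^{(k)} \in L^p(\R^n)$ for every $p \in [1,\infty]$.

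Next I would compute the relevant $L^p$ norm of the kernel. From $K_\lambda^{(k)}(x) = \lambda^{n+k} K_1^{(k)}(\lambda x)$ and a change of variables, $\|K_\lambda^{(k)}\|_{L^p(\R^n)} = \lambda^{n+k} \lambda^{-n/p} \|K_1^{(k)}\|_{L^p(\R^n)} = \lambda^{k + n(1 - 1/p)} \|K_1^{(k)}\|_{L^p(\R^n)}$. Now apply Young's convolution inequality in the form $\|g \ast f\|_{L^q} \le \|g\|_{L^p} \|f\|_{L^r}$ with the exponent relation $1 + 1/q = 1/p + 1/r$, i.e. $1/p = 1 + 1/q - 1/r = 1 - (1/r - 1/q)$. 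With this choice of $p$ we get
\[
\|D^k P_\lambda f\|_{L^q(\R^n)} = \|K_\lambda^{(k)} \ast f\|_{L^q(\R^n)} \le \|K_\lambda^{(k)}\|_{L^p(\R^n)} \|f\|_{L^r(\R^n)} \lesssim \lambda^{k + n(1/r - 1/q)} \|f\|_{L^r(\R^n)},
\]
since $n(1 - 1/p) = n(1/r - 1/q)$. Finally, to get $P_\lambda f$ on the right-hand side rather than $f$, I would use a fattened multiplier: choose $\tilde\rho \in C_0^\infty$ with $\tilde\rho \equiv 1$ on $\supp \rho$ and $\supp\tilde\rho$ still contained in an annulus of size comparable to $1$, set $\tilde P_\lambda$ to be the corresponding projection, so that $\tilde P_\lambda P_\lambda = P_\lambda$; then apply the above to $\tilde P_\lambda(P_\lambda f)$ in place of $P_\lambda f$, giving $\|D^k P_\lambda f\|_{L^q} = \|D^k \tilde P_\lambda (P_\lambda f)\|_{L^q} \lesssim \lambda^{k + n(1/r - 1/q)}\|P_\lambda f\|_{L^r}$, which is exactly the claimed bound.

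The only mildly delicate point — hardly an obstacle — is making sure the symbol $|\xi|^k \rho(|\xi|)$ is genuinely smooth even when $k$ is not an even integer: this is fine because $\rho$ is supported away from the origin, so $|\xi|^k$ is smooth on the support, and the product is a Schwartz function whose inverse Fourier transform therefore has rapidly decaying (hence $L^p$ for all $p$) kernel. Everything else is the standard scaling bookkeeping, and the constants depend only on $n$, $k$, and the fixed profile $\rho$ (and $\tilde\rho$), not on $\lambda$, $q$, or $r$.
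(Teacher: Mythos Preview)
Your proof is correct and is the standard textbook argument for the Bernstein inequality. The paper does not give its own proof of this lemma --- it is simply cited from Chemin's book \cite{chemin} --- so there is nothing to compare against beyond noting that your convolution-plus-Young's-inequality argument is precisely the classical one found there.
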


\begin{lemma}
\label{L2lemma}
Let $n\ge 1$, $1/2<b<1$ and  $0 < T < 1$.
Then
\[
\norm{ |D| \angles{D}^{-2} P_{\lambda_3} \left(P_{\lambda_1}  u_1 P_{\lambda_2} u_2 \right) }_{L_T^2 L^2_x}  
\lesssim   \lambda_3 \angles{\lambda_3}^{-2}   B(\lambda)
\norm{P_{\lambda_1} u_1 }_{ X^{0, b} } 
\norm{P_{\lambda_2} u_2  }_{X^{0, b}},
\]
where 
\begin{equation}\label{B-q1}
B(\lambda) \sim [\min (\lambda_1, \lambda_2)]^\frac n2.
\end{equation}
Moreover, if $\max (\lambda_1, \lambda_2)\gg 1$, then we can take
\begin{equation}\label{B-q2}
B(\lambda) \sim 
\begin{cases} 
  [\max (\lambda_1, \lambda_2)]^{-\frac14 } \ \qquad  &\text{if} \quad   n=1,
\\
[\min (\lambda_1, \lambda_2)]^{ \frac n2 -1 +2\delta }   [\max (\lambda_1, \lambda_2)]^{-\frac12 + \delta}  \qquad  &\text{if} \quad   n\ge 2.
\end{cases}
\end{equation}
for  sufficiently small $\delta >0$.
\end{lemma}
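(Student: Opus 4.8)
The plan is to reduce the bilinear $L^2_TL^2_x$ bound to a product of frequency-localized Strichartz estimates from Lemma \ref{lm-LocStr}, using duality and Hölder in the output frequency. First I would dispose of the multiplier: since the output is localized to $|\xi|\sim\lambda_3$, we have $\||D|\angles{D}^{-2}P_{\lambda_3}g\|_{L^2}\lesssim\lambda_3\angles{\lambda_3}^{-2}\|P_{\lambda_3}g\|_{L^2}$, so it suffices to bound $\|P_{\lambda_3}(u_{1,\lambda_1}u_{2,\lambda_2})\|_{L^2_TL^2_x}$ by $B(\lambda)\prod\|u_{j,\lambda_j}\|_{X^{0,b}}$. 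I would then estimate $\|u_{1,\lambda_1}u_{2,\lambda_2}\|_{L^2_{t,x}}$ directly (dropping $P_{\lambda_3}$, which only costs a constant) by Hölder in space-time.

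For the crude bound \eqref{B-q1}, the plan is: assume WLOG $\lambda_1\le\lambda_2$; put the low-frequency factor in $L^\infty_{t,x}$ and the high one in $L^2_{t,x}$. By Bernstein (the stated Lemma), $\|u_{1,\lambda_1}\|_{L^\infty_x}\lesssim\lambda_1^{n/2}\|u_{1,\lambda_1}\|_{L^2_x}$, and then $\|u_{1,\lambda_1}\|_{L^\infty_{t,x}}\lesssim\lambda_1^{n/2}\|u_{1,\lambda_1}\|_{L^\infty_tL^2_x}\lesssim\lambda_1^{n/2}\|u_{1,\lambda_1}\|_{X^{0,b}}$ since $b>1/2$; combined with $\|u_{2,\lambda_2}\|_{L^2_{t,x}}\lesssim T^{?}\|u_{2,\lambda_2}\|_{X^{0,b}}$ (or simply $\lesssim\|u_{2,\lambda_2}\|_{X^{0,0}}\le\|u_{2,\lambda_2}\|_{X^{0,b}}$) this gives $B(\lambda)\sim\lambda_1^{n/2}=[\min(\lambda_1,\lambda_2)]^{n/2}$.

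For the refined bound \eqref{B-q2} when $\max(\lambda_1,\lambda_2)\gg1$, the idea is to exploit the dispersive gain $\lambda^{-1/q}$ in \eqref{Str-transfer}. Again take $\lambda_1\le\lambda_2$, so $\lambda_2\gg1$. Apply Hölder $\|u_{1,\lambda_1}u_{2,\lambda_2}\|_{L^2_{t,x}}\le\|u_{1,\lambda_1}\|_{L^{q_1}_tL^{r_1}_x}\|u_{2,\lambda_2}\|_{L^{q_2}_tL^{r_2}_x}$ with $1/q_1+1/q_2=1/2$, $1/r_1+1/r_2=1/2$, both $(q_j,r_j)$ admissible in the sense of \eqref{admissible}. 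For $n\ge2$ one can choose the endpoint-type exponents $q_j$ close to the minimal admissible value (dictated by $2/q+n/r=n/2$, $r\ge2$), so that $r_2$ is close to $2$ and $r_1$ correspondingly large; Bernstein on the low factor converts the large $L^{r_1}_x$ norm into $L^2_x$ at cost $\lambda_1^{n(1/2-1/r_1)}$, and \eqref{Str-transfer} supplies $\lambda_1^{-1/q_1}\lambda_2^{-1/q_2}$. Bookkeeping the powers, using $2/q_j+n/r_j=n/2$, yields exactly $[\min]^{n/2-1+2\delta}[\max]^{-1/2+\delta}$ with $\delta\to0$ as the exponents approach the endpoint. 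For $n=1$ one instead uses, e.g., $(q_1,r_1)=(q_2,r_2)=(4,\infty)$ (admissible since $2/4+1/\infty=1/2$), so $\|u_{j,\lambda_j}\|_{L^4_tL^\infty_x}\lesssim\lambda_j^{-1/4}\|u_{j,\lambda_j}\|_{X^{0,b}}$; applying this only to the high factor $\lambda_2$ and estimating the low factor $\lambda_1$ in $L^\infty_tL^2_x$ (or symmetrically) produces the stated $[\max]^{-1/4}$.

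The main obstacle I anticipate is the $n\ge2$ exponent juggling: one needs an admissible Strichartz pair with $r$ arbitrarily close to $2$ but strictly above it (forcing $q$ close to but above $4/n$, which is $\le2$ only when $n\ge2$ — and one must respect $q>2$), then to balance the Bernstein loss on the low-frequency factor against the $\lambda^{-1/q}$ Strichartz gains so that the net low-frequency power is $n/2-1+2\delta$ rather than something worse; keeping track of which inequalities are lossless and which cost an $\varepsilon$ (hence the $\delta$) is the delicate part. The $T$-dependence is harmless: any positive power of $T$ can be extracted by Hölder in time against $b>1/2$, or simply absorbed since $T<1$; the lemma as stated does not even claim a $T$ gain, so one may ignore it entirely for the $L^2_TL^2_x$ estimate and recover the $T^{1-b}$ factor later at the level of \eqref{biest1}.
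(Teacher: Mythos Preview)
Your approach is essentially the same as the paper's: strip the multiplier by Bernstein, then for \eqref{B-q1} use H\"older $(L^\infty_T L^\infty_x)\times(L^\infty_T L^2_x)$ with Bernstein on the low factor, and for \eqref{B-q2} use Strichartz on the high-frequency factor only.

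One concrete point in your $n\ge 2$ sketch needs correcting. You write that both H\"older pairs $(q_j,r_j)$ should be admissible in the sense of \eqref{admissible} and that \eqref{Str-transfer} then supplies $\lambda_1^{-1/q_1}\lambda_2^{-1/q_2}$. This cannot work as stated: first, summing the admissibility relations $2/q_j+n/r_j=n/2$ under your H\"older constraints $1/q_1+1/q_2=1/2$, $1/r_1+1/r_2=1/2$ forces $n=2$, so for $n\ge 3$ both pairs cannot be admissible; second, Lemma~\ref{lm-LocStr} requires $\lambda\gg 1$, and you only assume $\lambda_2=\max(\lambda_1,\lambda_2)\gg 1$, so \eqref{Str-transfer} is not available for the low factor when $\lambda_1\lesssim 1$; third, once you apply Bernstein to send the low factor from $L^{r_1}_x$ to $L^2_x$, you are no longer in a Strichartz space and there is no $\lambda_1^{-1/q_1}$ gain to claim. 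The fix (and what the paper does) is to put only the high factor in an admissible pair $(q,r)$ with $q=2/(1-2\delta)$, and the low factor in $L^\infty_T L^{p}_x$ with $1/p+1/r=1/2$, then Bernstein $L^{p}_x\to L^2_x$ at cost $\lambda_1^{n/2-2/q}=\lambda_1^{n/2-1+2\delta}$ and the trivial embedding $L^\infty_T L^2_x\hookleftarrow X^{0,b}$. The Strichartz gain $\lambda_2^{-1/q}=\lambda_2^{-1/2+\delta}$ on the high factor alone already gives exactly \eqref{B-q2}. Your $n=1$ argument is exactly the paper's.
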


\begin{proof}
In view of the Bernstein   inequality, it suffices to prove
$$
 \norm{P_{\lambda_1} u_1 P_{\lambda_2} u_2 }_{L_T^2L^2_x} \lesssim B(\lambda)
\norm{P_{\lambda_1} u_1 }_{ X^{0, b} } 
\norm{P_{\lambda_2} u_2  }_{X^{0, b}},
$$
By symmetry, we may assume 
$ \lambda_1 \le \lambda_2 $. 
By the H\"{o}lder  and Sobolev inequalities,
\begin{align*}
  \norm{P_{\lambda_1} u_1 P_{\lambda_2} u_2}_{L_T^2L^2_x} &\le T^{\frac 12} 
  \norm{P_{\lambda_1} u_1 }_{ L_T^\infty L_x^\infty  } 
  \norm{ P_{\lambda_2} u_2}_{ L_T^\infty L_x^2  }
  \\
 & \lesssim  T^{\frac 12} \lambda_1^\frac n2
  \norm{P_{\lambda_1} u_1 }_{ L_T^\infty L_x^2  } 
 \norm{ P_{\lambda_2} u_2}_{ L_T^\infty L_x^2  }
  \\
 & \lesssim T^{\frac 12}\lambda_1^\frac n2
  \norm{P_{\lambda_1} u_1 }_{X^{0, b}}
  \norm{  P_{\lambda_2} u_2 }_{X^{0, b}}
\end{align*}
This proves \eqref{B-q1}.

\vspace{2mm}
Next, assume $\lambda_2 \gg 1$. If $n=1$, then by the H\"{o}lder inequality in $t$,  the Bernstein   inequality in $x$, and finally Lemma \ref{lm-LocStr}, we get
\begin{align*}
  \norm{P_{\lambda_1} u_1 P_{\lambda_2} u_2}_{L_T^2L^2_x} &\le  T^{\frac 14}   \norm{ P_{\lambda_1} u_1  }_{ L^\infty_T L^{2}_x  }
  \norm{P_{\lambda_2} u_2}_{ L_T^4 L_x^\infty } 
  \\
 & \lesssim  T^{\frac 14}   \lambda_2^{ -\frac 14 }
  \norm{P_{\lambda_1} u_1 }_{X^{0, b}}
  \norm{ P_{\lambda_2} u_2 }_{X^{0, b}}.
\end{align*}
 If $n\ge 2$, we choose the Strichartz admissible pair 
$$
q=\frac {2} {1-2\delta}, \qquad r= \frac{2qn}{qn-4} .
$$
Then applying the H\"{o}lder inequality in $t$,  the Bernstein   inequality in $x$, and finally Lemma \ref{lm-LocStr}, we get
\begin{align*}
  \norm{P_{\lambda_1} u_1 P_{\lambda_2} u_2}_{L_T^2L^2_x} &\le  T^{\frac 12 -\frac 1q}   \norm{ P_{\lambda_1} u_1 }_{ L^\infty_T L^{\frac{qn}2}_x  }
  \norm{ P_{\lambda_2} u_2}_{ L_T^q L_x^r } 
  \\
  &\le  T^{\frac 12 -\frac 1q}  \lambda_1^{ \frac n2 -\frac 2q}  \lambda_2^{-\frac1q} \norm{ P_{\lambda_1} u_1}_{ L^\infty_T L^2_x  }  \norm{ P_{\lambda_2} u_2 }_{X^{0, b}}
  \\
 & \lesssim  T^{\delta} \lambda_1^{ \frac n2 -1 +2\delta }   \lambda_2^{ -\frac 12 + \delta}
  \norm{P_{\lambda_1} u_1 }_{X^{0, b}}
  \norm{ P_{\lambda_2} u_2}_{X^{0, b}}.
\end{align*}

\end{proof}

\begin{lemma}
\label{L2lemma-cub}
Let $n\ge 1$, $1/2<b<1$ and  $0 < T < 1$.
Assume that $\lambda_{\text{min}}$,  $\lambda_{\text{med}}$ and  $\lambda_{\text{max}}$
denote the minimum, median and maximum of $\lambda_1$,  $\lambda_2$ and  $\lambda_3$, respectively.
Then
\[
\norm{ |D| \angles{D}^{-2} P_{\lambda_4} (P_{\lambda_1}  u_1 P_{\lambda_2} u_2  P_{\lambda_3} u_3 )}_{L_T^2 L^2_x}  
\lesssim 
   \lambda_4 \angles{\lambda_4}^{-2}  B(\lambda)   \prod_{j=1}^3
\norm{P_{\lambda_j} u_j}_{ X^{0, b} } 
\]
where 
\begin{equation}\label{B-c1}
B(\lambda) =    ( \lambda_{ \text{min }}  \lambda_{ \text{med }} )^\frac n2.
\end{equation}
Moreover,
\begin{itemize}
\item  If $n=1$, we can take
\begin{equation}
\label{B-c2}
B(\lambda)=  \begin{cases} 
 \lambda_{ \text{min }}^\frac12 \lambda_{ \text{max} }^{-\frac14}  \qquad  &\text{if} \ \    \lambda_{\text{max}} \gg 1,
 \\
 ( \lambda_{ \text{med }} \lambda_{ \text{max} })^{-\frac14}  \qquad  &\text{if} \ \quad \lambda_{\text{med}} \gg 1.
\end{cases}
\end{equation}

\item  If $n\ge 2$ and $\lambda_{\text{max}} \gg 1$, we can take
\begin{equation}
\label{B-c3}
B(\lambda)= 
     \lambda_{ \text{min} }^{ \frac n2 }  \lambda_{ \text{med} }^{ \frac n2 -1 +2\delta }   \lambda_{ \text{max} }^{-\frac12 + \delta} .
\end{equation}
\end{itemize}

\end{lemma}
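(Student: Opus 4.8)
The plan is to reduce the trilinear estimate to the bilinear machinery of Lemma \ref{L2lemma} by iterating Hölder and the localized Strichartz estimate in Lemma \ref{lm-LocStr}. As in the bilinear case, the factor $|D|\angles{D}^{-2}P_{\lambda_4}$ contributes the harmless multiplier $\lambda_4\angles{\lambda_4}^{-2}$ after the Bernstein inequality, so it suffices to bound $\norm{P_{\lambda_1}u_1\,P_{\lambda_2}u_2\,P_{\lambda_3}u_3}_{L_T^2L_x^2}$ by $B(\lambda)\prod_{j}\norm{P_{\lambda_j}u_j}_{X^{0,b}}$. By symmetry I may label the frequencies so that $\lambda_1=\lambda_{\text{min}}\le\lambda_2=\lambda_{\text{med}}\le\lambda_3=\lambda_{\text{max}}$.

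For the crude bound \eqref{B-c1}, I would put the two lowest-frequency factors in $L_T^\infty L_x^\infty$ via Bernstein (gaining $\lambda_{\text{min}}^{n/2}$ and $\lambda_{\text{med}}^{n/2}$ respectively, after controlling $L_x^\infty$ by $L_x^2$ on each dyadic block) and the highest-frequency factor in $L_T^\infty L_x^2$, then use $\norm{P_\lambda u}_{L_T^\infty L_x^2}\lesssim\norm{P_\lambda u}_{X^{0,b}}$ for $b>1/2$; the time integration over $[0,T]$ is absorbed since $T<1$. This gives $B(\lambda)\sim(\lambda_{\text{min}}\lambda_{\text{med}})^{n/2}$.

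For the refined bounds, the idea is to spend one genuine Strichartz estimate on the high-frequency factor and handle the remaining product of two low-frequency factors either by Bernstein or by a second Strichartz estimate, exactly mimicking the proof of Lemma \ref{L2lemma}. When $n=1$ and $\lambda_{\text{max}}\gg1$, I would write $\norm{u_1u_2u_3}_{L_T^2L_x^2}\le T^{1/4}\norm{P_{\lambda_1}u_1}_{L_T^\infty L_x^\infty}\norm{P_{\lambda_2}u_2}_{L_T^\infty L_x^2}\norm{P_{\lambda_3}u_3}_{L_T^4L_x^\infty}$, using Bernstein on the first factor to gain $\lambda_{\text{min}}^{1/2}$ and Lemma \ref{lm-LocStr} with the admissible pair $(4,\infty)$ on the third to gain $\lambda_{\text{max}}^{-1/4}$; for the subcase $\lambda_{\text{med}}\gg1$ I would instead put both of the two larger-frequency factors into $L_T^4L_x^\infty$ Strichartz norms and the smallest into $L_T^\infty L_x^2$, producing $(\lambda_{\text{med}}\lambda_{\text{max}})^{-1/4}$. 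When $n\ge2$ and $\lambda_{\text{max}}\gg1$, I take the admissible pair $q=2/(1-2\delta)$, $r=2qn/(qn-4)$ as in Lemma \ref{L2lemma}, place $P_{\lambda_3}u_3$ in $L_T^qL_x^r$ (gaining $\lambda_{\text{max}}^{-1/q}\sim\lambda_{\text{max}}^{-1/2+\delta}$ up to the $\delta$ adjustment), place $P_{\lambda_2}u_2$ in $L_T^{?}L_x^{?}$ with a Bernstein loss $\lambda_{\text{med}}^{n/2-1+2\delta}$, and $P_{\lambda_1}u_1$ in $L_T^\infty L_x^\infty$ with Bernstein loss $\lambda_{\text{min}}^{n/2}$, choosing the Hölder exponents in $t$ so that they sum to $1/2$ and the leftover power of $T$ is a positive $T^\delta$.

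The main obstacle is purely bookkeeping: one must verify that the Hölder exponents in time and the Bernstein/Sobolev exponents in space can be chosen consistently so that the three factors carry exactly the claimed dyadic weights while the total $L_T^q$-time exponent closes (summing to the required $1/2$ with a spare positive power of $T$), and simultaneously that the admissibility condition \eqref{admissible} is respected for every Strichartz norm invoked. This is exactly the same juggling performed in the proof of Lemma \ref{L2lemma}, now carried out with three factors instead of two, so no new idea is needed beyond care with exponents.
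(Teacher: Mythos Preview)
Your proposal is correct and follows essentially the same approach as the paper: the same reduction via Bernstein to the undecorated trilinear $L_T^2L_x^2$ estimate, the same H\"older splittings (two factors in $L_T^\infty L_x^\infty$ for \eqref{B-c1}; one Strichartz $L_T^4L_x^\infty$ norm or two for the $n=1$ subcases; and one $L_T^qL_x^r$ Strichartz norm together with $L_T^\infty L_x^{qn/2}$ and $L_T^\infty L_x^\infty$ for $n\ge2$), and the same admissible pair $q=2/(1-2\delta)$, $r=2qn/(qn-4)$. The only place you left implicit---the $L_T^{?}L_x^{?}$ norm for the middle factor when $n\ge2$---is $L_T^\infty L_x^{qn/2}$, which indeed yields the Bernstein loss $\lambda_{\text{med}}^{n/2-2/q}=\lambda_{\text{med}}^{n/2-1+2\delta}$ you claimed.
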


\begin{proof}
In view of the Bernstein   inequality, it suffices to prove
$$
 \norm{P_{\lambda_1}  u_1 P_{\lambda_2} u_2  P_{\lambda_3} u_3 }_{L_T^2L^2_x} \lesssim B(\lambda)\prod_{j=1}^3
\norm{P_{\lambda_j} u_j}_{ X^{0, b} } .
$$
By symmetry, we may assume 
$ \lambda_1 \le \lambda_2 \le \lambda_3$.
By the H\"{o}lder   and Sobolev inequalities,
\begin{align*}
 \norm{P_{\lambda_1}  u_1 P_{\lambda_2} u_2  P_{\lambda_3} u_3 }_{L_T^2L^2_x}  &\le T^{\frac 12} 
  \norm{P_{\lambda_1}  u_1}_{ L_T^\infty L_x^\infty  }  \norm{P_{\lambda_2} u_2}_{ L_T^\infty L_x^\infty  } 
  \norm{ P_{\lambda_3} u_3 }_{ L_T^\infty L_x^2  }
  \\
 & \lesssim  T^{\frac 12} ( \lambda_1  \lambda_2)^\frac n2 \prod_{j=1}^3
  \norm{ P_{\lambda_j} u_j }_{ L_T^\infty L_x^2  }
  \\
 & \lesssim T^{\frac 12} ( \lambda_1  \lambda_2)^\frac n2
  \prod_{j=1}^3
  \norm{ P_{\lambda_j} u_j }_{ X^{0, b}  }.
\end{align*}

Next, assume $\lambda_3 \gg 1$. 

\textbf{Case: $n=1$:}   By the H\"{o}lder inequality in $t$,  the Bernstein   inequality in $x$, and finally Lemma \ref{lm-LocStr}, we get
\begin{align*}
  \norm{P_{\lambda_1} u_1 P_{\lambda_2} u_2  P_{\lambda_3} u_3}_{L_T^2L^2_x} &\le  T^{\frac 14}   \norm{ P_{\lambda_1} u_1  }_{ L^\infty_T L^{\infty}_x  } \norm{ P_{\lambda_2} u_2  }_{ L^\infty_T L^{2}_x  }
  \norm{P_{\lambda_3} u_3}_{ L_T^4 L_x^\infty } 
  \\
 & \lesssim  T^{\frac 14}   \lambda_1^{ \frac 12 } \lambda_3^{ -\frac 14 } \prod_{j=1}^3
  \norm{ P_{\lambda_j} u_j }_{ X^{0, b}  }.
\end{align*}
In the case $\lambda_2 \gg 1$,
\begin{align*}
  \norm{P_{\lambda_1} u_1 P_{\lambda_2} u_2  P_{\lambda_3} u_3}_{L_T^2L^2_x} &\le   \norm{ P_{\lambda_1} u_1  }_{ L^\infty_T L^{2}_x  } \norm{ P_{\lambda_2} u_2  }_{ L^4_T L^{\infty}_x  }
  \norm{P_{\lambda_3} u_3}_{ L_T^4 L_x^\infty } 
  \\
 & \lesssim   ( \lambda_2 \lambda_3)^{ -\frac 14 } \prod_{j=1}^3
  \norm{ P_{\lambda_j} u_j }_{ X^{0, b}  }.
\end{align*}

 \textbf{Case: $n\ge 2$:} We choose the Strichartz admissible pair 
$$
q=\frac {2} {1-2\delta}, \qquad r= \frac{2qn}{qn-4}   \qquad (n\ge 2).
$$
Applying the H\"{o}lder inequality in $t$,  the Bernstein   inequality in $x$, and finally Lemma \ref{lm-LocStr}, we get
\begin{align*}
  \norm{P_{\lambda_1}  u_1 P_{\lambda_2} u_2  P_{\lambda_3} u_3  }_{L_T^2L^2_x}  &\le  T^{\frac 12 -\frac 1q}   \norm{ P_{\lambda_1}  u_1}_{ L^\infty_T L^{\infty}_x  }
  \norm{P_{\lambda_2}  u_2}_{ L_T^\infty L_x^{\frac{qn}2} }  \norm{P_{\lambda_3}  u_3}_{ L_T^q L_x^r } 
  \\
  &\le  T^{\frac 12 -\frac 1q} \lambda_1^{ \frac n2 }   \lambda_2^{ \frac n2 -\frac 2q}  \lambda_3^{-\frac1q}   \norm{ P_{\lambda_1}  u_1}_{ L^\infty_T L^2_x  }
  \norm{P_{\lambda_2}  u_2}_{ L_T^\infty L_x^2 }  \norm{ P_{\lambda_3}  u_3 }_{X^{0, b}}
  \\
 & \lesssim  T^{\delta} \lambda_1^{ \frac n2 }   \lambda_2^{ \frac n2 -1+ 2\delta} \lambda_3^{ -\frac 12 + \delta}
  \prod_{j=1}^3
  \norm{ P_{\lambda_j} u_j }_{ X^{0, b}  }.
\end{align*}

\end{proof}

\section{Proof of Lemma \ref{lm-bilest}} \label{sec-pfQ}

We recall (see e.g., \cite{tao-book})
 \begin{align}
 \label{TFactor}
\norm{u}_{X^{s, b-1}(T)} &\le C
T^{1-b}\norm{u}_{ L_T^2 H_x^{s}},
\end{align}
where $C$ is independent on $T$.
So in view of \eqref{TFactor}, the estimate \eqref{biest1} reduces to proving
\begin{equation}\label{biest2}
\norm{     |D| \angles{D}^{-2} (u_1 u_2) }_{L_T^2 H_x^{s}}  
\lesssim
\norm{u_1}_{ X^{s, b} } 
\norm{u_2}_{X^{s, b}}.
\end{equation}

By duality \eqref{biest2} reduces further to
\begin{equation}\label{duality-biest11}
\left| \int_0^T \int_{\R^n}   |D|\angles{D}^{s-2}\left( \angles{D}^{-s} u_1 \cdot  \angles{D}^{-s} u_2 \right) u_3 \ dx dt \right| \lesssim
\norm{u_1}_{  X^{0, b}} \norm{u_2}_{  X^{0, b} } \norm{ u_3 }_{  L_T^2 L_x^2 }.
\end{equation}

Decomposing $u_j= \sum_{\lambda_j >0} P_{\lambda_j} u_j$,
we have
\begin{equation}\label{duality-biestdecomp}
\text{LHS \eqref{duality-biest11} } \lesssim \sum_{ \lambda_1, \lambda_2 , \lambda_3 } \left| \int_0^T \int_{\R^n}   |D| \angles{D}^{s-2}  P_{\lambda_3} \left( \angles{D}^{-{s}} P_{\lambda_1} u_1 \cdot  \angles{D}^{-{s}}  P_{\lambda_2} u_2 \right)
 P_{\lambda_3} u_3 \ dx dt \right|.
\end{equation}

Let
\begin{align*}
a_{\lambda_j}:&= \norm{P_{\lambda_j} u_j}_{  X^{0, b} }, 
\qquad
a_{\lambda_3}:=  \norm{P_{\lambda_3} u_3}_{L^2_{T,x}   }  \qquad (j=1, 2),
\end{align*}
Then
\begin{align*}
 \norm{u_j}_{  X^{0, b} } \sim \|(a_{\lambda_j})\|_{l^2_{\lambda_j}}, \qquad
 \norm{u_3}_{  L^2_{T,x} } \sim \|(a_{\lambda_3})\|_{l^2_{\lambda_3}}  \qquad   (j=1, 2).
\end{align*}

Therefore, estimate \eqref{duality-biest11} reduces to 
\begin{equation}\label{duality-biest11-1}
\text{RHS \eqref{duality-biestdecomp}}\lesssim \prod_{j=1}^3 \|(a_{\lambda_j})\|_{l^2_{\lambda_j}}.
\end{equation}

We prove \eqref{duality-biest11-1} as follows.
By the Cauchy-Schwarz inequality, Lemma \ref{L2lemma} and  the Bernstein inequality, we have
\begin{equation}\label{maindecomp}
\begin{split}
\text{RHS \eqref{duality-biestdecomp}}
&\lesssim
 \sum_{\lambda_1, \lambda_2 , \lambda_3 }
\norm{    |D|\angles{D}^{s-2} P_{\lambda_3} \left( \angles{D}^{-{s}} P_{\lambda_1} u_1 \cdot  \angles{D}^{-{s}}  P_{\lambda_2} u_2 \right)}_{L_{T,x}^2} \norm{P_{\lambda_3} u_3}_{ L_{T,x}^2  }
\\
&\lesssim
\underbrace{\sum_{
	 \lambda_1, \lambda_2 , \lambda_3	}    C(\lambda) 
a_{\lambda_1} a_{\lambda_2} a_{\lambda_3} }_{:= S}, 
\end{split}
\end{equation}
where 
$$
C(\lambda) = B(\lambda) \lambda_3 \angles{\lambda_3}^{s-2} 
\angles{\lambda_1}^{-s} \angles{\lambda_2}^{-s }  
$$
with $B(\lambda)$ as in \eqref{B-q1}-\eqref{B-q2}.

\vspace{2mm}
So \eqref{duality-biest11-1} reduces 
to proving 
\begin{equation}
\label{Sest}
S \lesssim \prod_{j=1}^3 \|(a_{\lambda_j})\|_{l^2_{\lambda_j}}.
\end{equation}

By symmetry of our argument, we may assume 
$ \lambda_1 \le \lambda_2 $. 

If $\lambda_2 \lesssim 1$, then $B(\lambda)\lesssim 1$, and hence
$$
C(\lambda) \lesssim  \lambda_3 \lambda_1 ^\frac n2  \lesssim  \lambda_3 (\lambda_1 \lambda_2)^\frac n4 .
$$
Then applying the Cauchy-Schwarz inequality in $\lambda_1, 
  \lambda_2$ and $\lambda_3$, we
   obtain
\begin{align*}
S &\lesssim \sum_{
	 \lambda_1, \lambda_2 , \lambda_3 \lesssim 1	}     \lambda_3 (\lambda_1 \lambda_2)^\frac n4
a_{\lambda_1} a_{\lambda_2} a_{\lambda_3} 
 \lesssim  \prod_{j=1}^3 \|(a_{\lambda_j})\|_{l^2_{\lambda_j}}.
\end{align*}
So, from now on we assume $\lambda_2 \gg 1$.  We consider two cases:
\begin{enumerate}[(i)]
\item  $\lambda_3 \ll \lambda_1 \sim\lambda_2 $,
\item $\lambda_1 \ll \lambda_2 \sim\lambda_3$.
\end{enumerate}

\subsection{\underline{(i): $\lambda_3 \ll \lambda_1 \sim\lambda_2$}}

\subsubsection{\underline{ $n=1$}:}
In this case, we have
$$
C(\lambda) \sim  \lambda_3 \angles{\lambda_3}^{s-2} 
\lambda_2^{-2s-1/4 }.
$$
\begin{itemize}
\item
If $\lambda_3 \lesssim 1$, then
\begin{align*}
S &\lesssim \sum_{
	 \lambda_3\lesssim 1 \ll \lambda_1 \sim\lambda_2	}     \lambda_3
\lambda_2^{-2s-1/4  }
a_{\lambda_1} a_{\lambda_2} a_{\lambda_3} 
 \lesssim  \prod_{j=1}^3 \|(a_{\lambda_j})\|_{l^2_{\lambda_j}}
\end{align*}
provided $s> -1/8$.

\item If $\lambda_3 \gg 1$, then 
\begin{align*}
S &\lesssim \sum_{
	 1\ll \lambda_3 \ll \lambda_1 \sim\lambda_2	}     \lambda_3^{s-1}
\lambda_2^{-2s-1/4  }
a_{\lambda_1} a_{\lambda_2} a_{\lambda_3} 
 \lesssim  \prod_{j=1}^3 \|(a_{\lambda_j})\|_{l^2_{\lambda_j}}
\end{align*}
provided $ -1/8<s<1$. In the case $s\ge 1$, we write $\lambda_3^{s-1}
\lambda_2^{-2s-1/4  }=(\lambda_3/\lambda_2)^{s-1}
\lambda_2^{-s-5/4  }$, which can be used to bound the sum.
\end{itemize}

\subsubsection{ \underline{$n\ge 2$}:}
In this case, we have
$$
C(\lambda) \sim  \lambda_3 \angles{\lambda_3}^{s-2} 
\lambda_2^{-2s +n/2 -3/2 + 3\delta }.
$$

\begin{itemize}

\item
First assume $\lambda_3 \lesssim 1$. Then
\begin{align*}
S  &\lesssim \sum_{
	 \lambda_3\lesssim 1 \ll \lambda_1 \sim\lambda_2	}     \lambda_3 
\lambda_2^{-2s +n/2 -3/2 + 3\delta }
a_{\lambda_1} a_{\lambda_2} a_{\lambda_3} 
 \lesssim  \prod_{j=1}^3 \|(a_{\lambda_j})\|_{l^2_{\lambda_j}}.
\end{align*}
provided $s> (n-3)/4 + (3/2) \delta$.

\item Next, assume $\lambda_3 \gg 1$. 

\begin{itemize}

\item
If $n\le 6$ and $ (n-3)/4 + (3/2) \delta<s< 1$, then 
\begin{align*}
S &\lesssim \sum_{
	 1\ll \lambda_3 \ll \lambda_1 \sim\lambda_2	}     \lambda_3^{s-1}
\lambda_2^{-2s +n/2 -3/2 + 3\delta }
a_{\lambda_1} a_{\lambda_2} a_{\lambda_3} 
 \lesssim  \prod_{j=1}^3 \|(a_{\lambda_j})\|_{l^2_{\lambda_j}}.
\end{align*}
\item If $n\le 6$ and $s\ge 1$, then 
\begin{align*}
S &\lesssim \sum_{
	 1\ll \lambda_3 \ll \lambda_1 \sim\lambda_2	}     \lambda_3^{-\delta}
\lambda_2^{-s +n/2 -5/2 + 4\delta }
a_{\lambda_1} a_{\lambda_2} a_{\lambda_3} 
 \lesssim  \prod_{j=1}^3 \|(a_{\lambda_j})\|_{l^2_{\lambda_j}}
\end{align*}
since $ -s + (n-5)/2+ 4\delta \le -1/2 + 4\delta$.
\item If  $n\ge 7$ and $ s > (n-5)/2+ 4\delta$ (which implies $s> 1$), then it is clear from the proceeding estimate that $\mathcal S$ is summable.

\end{itemize}
\end{itemize}

\vspace{2mm}

\subsection{\underline{(ii): $\lambda_1 \ll \lambda_2 \sim\lambda_3$}}

\subsubsection{\underline{$n=1$}:}

\begin{itemize}
\item If $\lambda_1 \lesssim 1$, then $B(\lambda)\sim \lambda_1^{1/2}$, and hence  
$
C(\lambda) \sim  \lambda_1^{1/2}
\lambda_2^{ -1 }.
$
Therefore,
\begin{align*}
S  &\lesssim \sum_{
	 \lambda_1 \lesssim 1 \ll \lambda_2 \sim\lambda_3	}    \lambda_1^{1/2}
\lambda_2^{ -1 }
a_{\lambda_1} a_{\lambda_2} a_{\lambda_3} 
 \lesssim  \prod_{j=1}^3 \|(a_{\lambda_j})\|_{l^2_{\lambda_j}}
\end{align*}

\item If $\lambda_1 \gg 1$, then
$B(\lambda)\sim \lambda_1^{-1/4}$,
and hence
$
C(\lambda) \sim  
\lambda_1^{ -s-5/4 }.
$
Consequently,
\begin{align*}
S &\lesssim \sum_{
	1\ll  \lambda_1 \ll \lambda_2 \sim\lambda_3	}     
\lambda_1^{ -s-5/4 }
a_{\lambda_1} a_{\lambda_2} a_{\lambda_3} 
 \lesssim  \prod_{j=1}^3 \|(a_{\lambda_j})\|_{l^2_{\lambda_j}}
\end{align*}
provided $s>-5/4$.
\end{itemize}

\subsubsection{\underline{ $n\ge 2$}:}

In this case, we have
$$
C(\lambda) \sim   \lambda_1^{n/2 -1+ 2\delta } \angles{\lambda_1}^{-s}
\lambda_2^{ -3/2 + 3\delta }.
$$

\begin{itemize}

\item
First assume $\lambda_1 \lesssim 1$. Then 
\begin{align*}
S &\lesssim \sum_{
	 \lambda_1\lesssim 1 \ll \lambda_2 \sim\lambda_3	}     \lambda_1^{n/2 -1+ 2\delta } 
\lambda_2^{ -3/2 + 3\delta }
a_{\lambda_1} a_{\lambda_2} a_{\lambda_3} 
 \lesssim  \prod_{j=1}^3 \|(a_{\lambda_j})\|_{l^2_{\lambda_j}}.
\end{align*}

\item Next, assume $\lambda_1 \gg 1$.  Then 
\begin{align*}
S &\lesssim \sum_{
	 1\ll \lambda_1 \ll \lambda_2 \sim\lambda_3	}     \lambda_1^{-s+ n/2-5/2+ 2\delta}   
a_{\lambda_1} a_{\lambda_2} a_{\lambda_3} 
 \lesssim  \prod_{j=1}^3 \|(a_{\lambda_j})\|_{l^2_{\lambda_j}}
\end{align*}
provided 
$
s>(n-5)/2+ 3\delta$.

\end{itemize}

\vspace{2mm}

\textbf{ \underline{Conclusion:}} From the cases (i) and (ii), we conclude that the estimate \eqref{Sest} holds provided
$$
s > \max \left(    n/4-3/4 , \  n/2-5/2\right)=  \begin{cases}
 n/4-3/4, \qquad  &for \quad 2\le n \le 6,
\\
 n/2-5/2, \qquad &for \quad  n \ge 7.
\end{cases}
$$
Similarly, for the case $n=1$, \eqref{Sest} holds provided
$$
s>-1/8.
$$

\section{Proof of Lemma \ref{lm-bilest-cubc} }\label{sec-pfC}

By duality \eqref{biest1-c} reduces further to (recall \eqref{TFactor})
\begin{equation}\label{duality-biest11-c}
\left| \int_0^T \int_{\R^n}   |D|\angles{D}^{s-2}\left( \angles{D}^{-s} u_1 \cdot  \angles{D}^{-s} u_2 \angles{D}^{-s}  u_3 \right)  u_4 \ dx dt \right| \lesssim  T^{1-b}  \prod_{j=1}^3
\norm{u_j}_{ X^{0, b} } \norm{u_4}_{  L^2_{T,x}}
\end{equation}

Decomposing $u_j= \sum_{\lambda_j >0} P_{\lambda_j} u_j$,
we have
\begin{equation}\label{duality-biestdecomp-c}
\begin{split}
&	\text{LHS \eqref{duality-biest11-c} }\\& \quad\lesssim \sum_{\lambda_1, \lambda_2, \lambda_3, \lambda_4  } \left| \int_0^T \int_{\R^n}   |D| \angles{D}^{s-2} P_{\lambda_4} \left( \angles{D}^{-{s}} P_{\lambda_1} u_1   \cdot \angles{D}^{-{s}}  P_{\lambda_2} u_2   \cdot   \angles{D}^{-s} P_{\lambda_3} u_3 \right) P_{\lambda_4} u_4\ dx dt \right|.
\end{split}
\end{equation}

Let
\begin{align*}
a_{\lambda_j}:&= \norm{P_{\lambda_j} u_j}_{  X^{0, b} }, 
\qquad
a_{\lambda_4}:=  \norm{P_{\lambda_4} u_4}_{L^2_{T,x}   }  \qquad (j=1, 2, 3).
\end{align*}
Then
\begin{align*}
 \norm{u_j}_{  X^{0, b} } \sim \|(a_{\lambda_j})\|_{l^2_{\lambda_j}}, \qquad
 \norm{u_4}_{  L^2_{T,x} } \sim \|(a_{\lambda_4})\|_{l^2_{\lambda_4}}  \qquad   (j=1, 2, 3).
\end{align*}

Now, by the Cauchy-Schwarz inequality, Lemma \ref{L2lemma-cub} and  the Bernstein   inequality,
\begin{equation}\label{maindecomp-c}
\begin{split}
\text{RHS \eqref{duality-biestdecomp-c}}
&\lesssim
 \sum_{\lambda_1, \lambda_2, \lambda_3, \lambda_4 }
\norm{    |D|\angles{D}^{s-2} P_{\lambda_4} \left( \angles{D}^{-{s}}   P_{\lambda_1} u_1  \cdot  \angles{D}^{-{s}}   P_{\lambda_2} u_2  \cdot  \angles{D}^{-{s}}    P_{\lambda_3} u_3  \right)}_{L_{T,x}^2} \norm{ P_{\lambda_4} u_4  }_{ L_{T,x}^2  }
\\
&\lesssim
\underbrace{\sum_{
	 \lambda_1, \lambda_2, \lambda_3, \lambda_4 	}   C(\lambda) 
a_{\lambda_1} a_{\lambda_2} a_{\lambda_3} a_{\lambda_4}}_{:= \mathcal S},
\end{split}
\end{equation}
where 
$$
C(\lambda) = B(\lambda) \cdot \lambda_4 \angles{\lambda_4}^{s-2} 
\angles{\lambda_1}^{-s} \angles{\lambda_2}^{-s } \angles{\lambda_3}^{-s }  
$$
with $B(\lambda)$ as in \eqref{B-c1}-\eqref{B-c3}.

\vspace{2mm}
So \eqref{duality-biest11-c} reduces 
to proving 
\begin{equation}
\label{mathS}
\mathcal S \lesssim \prod_{j=1}^4 \|(a_{\lambda_j})\|_{l^2_{\lambda_j}}.
\end{equation}

\vspace{2mm}
By symmetry of our argument, we may assume 
$ \lambda_1 \le \lambda_2 \le \lambda_3$. 

If $\lambda_3 \lesssim 1$, then 
$$
C(\lambda) \lesssim  \lambda_4 ( \lambda_1 \lambda_2)^\frac n2  \lesssim  \lambda_4 ( \lambda_1 \lambda_2 \lambda_3)^\frac n3  .
$$
Then applying the Cauchy-Schwarz inequality in $\lambda_1, 
  \lambda_2, \lambda_3 $ and $\lambda_4$, to
   obtain
\begin{align*}
\mathcal S &\lesssim 
 \sum_{    \lambda_1,  \lambda_2, \lambda_3, \lambda_4 \lesssim 1 }   \lambda_4 ( \lambda_1 \lambda_2 \lambda_3)^\frac n3 
a_{\lambda_1} a_{\lambda_2} a_{\lambda_3} a_{\lambda_4} \lesssim\prod_{j=1}^4 \|(a_{\lambda_j})\|_{l^2_{\lambda_j}}.
\end{align*}
So, we assume $\lambda_3 \gg 1$.  We consider three cases:
\begin{enumerate}[(i)]
\item $ \lambda_1 \sim \lambda_2 \sim \lambda_3$,

\item $ \lambda_1 \le \lambda_2 \ll \lambda_3$,
\item $ \lambda_1 \ll \lambda_2 \sim \lambda_3$.
\end{enumerate}

\subsection{ \underline{(i): $ \lambda_1 \sim \lambda_2 \sim \lambda_3$}}

\subsubsection{ \underline{$ n=1$}}
In this case, we have
$$
C(\lambda) \sim  \lambda_4 \angles{\lambda_4}^{s-2} 
 \lambda_3^{ -1/2  -3s }   .
$$
If $\lambda_4 \lesssim 1$, then 
\begin{align*}
\mathcal S &\lesssim 
 \sum_{ \lambda_4 \lesssim 1, \   \lambda_1\sim  \lambda_2\sim\lambda_3\gg 1 }   \lambda_4 \lambda_3^{ -1/2  -3s} 
a_{\lambda_1} a_{\lambda_2} a_{\lambda_3} a_{\lambda_4} \lesssim\prod_{j=1}^4 \|(a_{\lambda_j})\|_{l^2_{\lambda_j}}
\end{align*}
provided $s>-1/6 $. 

 If $\lambda_4 \gg 1$, then 
\begin{align*}
\mathcal S  &\lesssim 
 \sum_{ \lambda_4 \gg 1, \   \lambda_1\sim  \lambda_2\sim\lambda_3\gg 1 }   \lambda_4^{s-1} \lambda_3^{ -1/2  -3s } 
a_{\lambda_1} a_{\lambda_2} a_{\lambda_3} a_{\lambda_4} \lesssim\prod_{j=1}^4 \|(a_{\lambda_j})\|_{l^2_{\lambda_j}}
\end{align*}
provided $-1/6 <s <1$. In the case $s\ge 1$, we can use the estimate $\lambda_4^{s-1} \lambda_3^{ -1/2  -3s } \lesssim \lambda_4^{-1} \lambda_3^{ -3/2  -2s } $ to get the desired estimate.

\subsubsection{ \underline{$ n\ge 2$}}

In this case, we have
$$
C(\lambda) \sim  \lambda_4 \angles{\lambda_4}^{s-2} 
 \lambda_3^{ n-3/2  -3s+ 3\delta }   .
$$
\begin{itemize}

\item
First, assume $\lambda_4 \lesssim 1$.
Then 
\begin{align*}
\mathcal S &\lesssim 
 \sum_{ \lambda_4 \lesssim 1, \   \lambda_1\sim  \lambda_2\sim\lambda_3\gg 1 }   \lambda_4 \lambda_3^{ n-3/2  -3s+ 3\delta } 
a_{\lambda_1} a_{\lambda_2} a_{\lambda_3} a_{\lambda_4} \lesssim\prod_{j=1}^4 \|(a_{\lambda_j})\|_{l^2_{\lambda_j}}
\end{align*}
provided $s>n/3-1/2 + (3/2)\delta$. 

\item Next, assume $\lambda_4 \gg 1$.

\begin{itemize}

\item
If $n\le 4$ and $n/3-1/2 +  (3/2)\delta<s<1$, then
\begin{align*}
\mathcal S  &\lesssim 
 \sum_{ \lambda_4 \gg 1, \   \lambda_1\sim  \lambda_2\sim\lambda_3\gg 1 }   \lambda_4^{s-1} \lambda_3^{ n-3/2  -3s+ 3\delta } 
a_{\lambda_1} a_{\lambda_2} a_{\lambda_3} a_{\lambda_4} \lesssim\prod_{j=1}^4 \|(a_{\lambda_j})\|_{l^2_{\lambda_j}}.
\end{align*}
\item If $n\le 4$ and $s\ge 1$, then
 \begin{align*}
\mathcal S &\lesssim 
 \sum_{ \lambda_4 \gg 1, \   \lambda_1\sim  \lambda_2\sim\lambda_3\gg 1 }    \lambda_4 ^{-\delta}     \lambda_3^{n-5/2  -2s+ 4\delta }  
a_{\lambda_1} a_{\lambda_2} a_{\lambda_3} a_{\lambda_4} \lesssim\prod_{j=1}^4 \|(a_{\lambda_j})\|_{l^2_{\lambda_j}}
\end{align*}
since $n-5/2  -2s+ 4\delta <-1/2 + 4\delta$.

\item  If  $n\ge 5$ and $ s > n/2-5/4+ 4\delta$ (which implies $s\ge 1$), then it is clear that the proceeding estimate that $\mathcal S$ is summable.
  \end{itemize}

    \end{itemize}

\subsection{ \underline{  (ii): $  \lambda_1 \le \lambda_2 \ll \lambda_3$}}

This implies $\lambda_3 \sim \lambda_4$.

\subsubsection{ \underline{$ n=1$}}
In this case, we have
$$
C(\lambda) \sim B(\lambda)  \angles{\lambda_1}^{-s}   \angles{\lambda_2}^{-s }\lambda_3^{-1 }  
$$

\begin{itemize}

\item
If $\lambda_1 \le \lambda_2 \lesssim 1$, then we can use the estimate
$
C(\lambda) \sim  (\lambda_1  \lambda_2) ^{ 1/2}      \lambda_3^{-1 } 
$
to sum up $\mathcal S$.

\item If $\lambda_1 \lesssim 1$, $\lambda_2 \gg 1$, then we can use the estimate
$
C(\lambda) \sim  (\lambda_1  \lambda_2) ^{ 1/2}     \lambda_2^{-s }  \lambda_3^{-1 } \lesssim \lambda_1^{ 1/2}     \lambda_2^{-1/2-s }  
$
to sum up $\mathcal S$ provided $s>-1/2$.

\item If $ 1 \ll \lambda_1 \le \lambda_2$, we can take $B(\lambda) \sim   (\lambda_2  \lambda_3) ^{ -1/4}    $, and hence $
C(\lambda) \sim  \lambda_1^{-3/4-s }  \lambda_2^{-3/4-s } .
$
So $\mathcal S$ is summable provided $s>-3/4$.

\end{itemize}

\subsubsection{ \underline{$ n\ge 2$}}
In this case, we have
$$
C(\lambda) \lesssim  \lambda_1 ^\frac n2   \angles{\lambda_1}^{-s}  \lambda_2 ^{\frac n2 - 1 + 2 \delta}  \angles{\lambda_2}^{-s }\lambda_3^{-\frac32 + \delta }  .
$$
\begin{itemize}

\item If $\lambda_1 \le \lambda_2 \lesssim 1$, then 
$
C(\lambda) \sim  \lambda_1 ^{ n/2}    \lambda_2 ^{ n/2 - 1 + 2 \delta}   \lambda_3^{-3/2 + \delta } .
$
Therefore, $\mathcal S$ is summable,
  and hence \eqref{mathS} holds. 

\item If $\lambda_1 \lesssim 1$, $\lambda_2 \gg 1$, then 
$
C(\lambda) \lesssim  \lambda_1 ^{n/2 }   \lambda_2 ^{n/2 - 5/2-s + 3 \delta},  
$
 and hence $\mathcal S$ is summable if $s>n/2-5/2+ 3\delta$.

\item If $ 1 \ll \lambda_1 \le \lambda_2$, then 
$
C(\lambda) \lesssim  \lambda_1 ^{n/2-s}    \lambda_2 ^{ n/2 - 5/2-s + 3 \delta},   
$
and hence $\mathcal S$ is summable provided $s>n/2$. On the other hand,  in the case $s\le n/2$, we have
$
C(\lambda) \lesssim  \lambda_1^{-\delta}    \lambda_2 ^{n - 5/2-2s + 4 \delta}  ,
$
and so $\mathcal S$ is summable provided $s>n/2-5/4+4 \delta$.
\end{itemize}

\subsection{ \underline{(iii): $\lambda_1 \ll \lambda_2 \sim \lambda_3 $}}

\subsubsection{ \underline{$ n=1$}}

\subsubsection*{ (a) \underline{Sub-case: $\lambda_4 \sim \lambda_3  $ }}

In this case, we have
$$
C(\lambda) \sim B(\lambda)  \angles{\lambda_1}^{-s}  \lambda_3^{-1-s }  .
$$

\begin{itemize}

\item
If $\lambda_1  \lesssim 1$, then we take $B(\lambda) \sim   \lambda_1^{1/2}  \lambda_3 ^{ -1/4}    $, and hence the estimate
$
C(\lambda) \sim  \lambda_1  ^{ 1/2}      \lambda_3^{-5/4 -s } 
$
can be used to sum up $\mathcal S$ if $s>-5/4$.

\item If $\lambda_1  \gg 1$, then then we take $B(\lambda) \sim   (\lambda_2 \lambda_3 )^{ -1/4}    $, and hence the estimate
$
C(\lambda) \lesssim  \lambda_1  ^{ -3/4 -s}      \lambda_3^{-3/4 -s } 
$
can be used to sum up $\mathcal S$ provided $s>-3/4$.

\end{itemize}

\subsubsection*{(b) \underline{Sub-case: $\lambda_4 \ll \lambda_3  $ }}
In this case, we have 
$$
C(\lambda) \sim  \lambda_4 \angles{\lambda_4}^{s-2}   \angles{\lambda_1}^{-s}  \lambda_3^{-1/2-2s } 
$$
where we chose $B(\lambda) \sim   (\lambda_2 \lambda_3 )^{ -1/4}  \sim  \lambda_3 ^{ -1/2}  $. 

\begin{itemize}

\item If $\lambda_4,  \lambda_1  \lesssim 1$, then we ignore $\angles{\lambda_4}^{s-2}$ and $\angles{\lambda_1}^{-s} $ to sum up $\mathcal S$ provided $s>-1/4$.  

\item If $\lambda_4 \lesssim 1$ and $\lambda_1 \gg 1$, then 
$
C(\lambda) \sim  \lambda_4 \lambda_1 ^{ -s}
  \lambda_3^{ -1/2-2s } \lesssim \lambda_4 \lambda_1 ^{ -1/6-s}
  \lambda_3^{ -1/3-2s }   ,
$
and hence $\mathcal S$ can be summed for  $s>-1/6$.

\item If $\lambda_4 \gg 1$ and $\lambda_1 \lesssim 1$, then 
$
C(\lambda) \sim  \lambda_4  ^{ s-1}
  \lambda_3^{ -1/2-2s } .
$
This can be used to sum up $\mathcal S$ if $s>-1/4$. 

\item Finally, if
$\lambda_4, \lambda_1 \gg  1$, then
$
C(\lambda) \sim  \lambda_4  ^{ s-1}  \lambda_1  ^{ -s}
  \lambda_3^{ -1/2-2s } \lesssim \lambda_4  ^{ s-1}  \lambda_1  ^{ -1/6-s}
  \lambda_3^{ -1/3-2s } .
$
and hence $\mathcal S$ can be summed easily for $s>-1/6$.

\end{itemize}

\vspace{2mm}

\subsubsection{ \underline{$ n\ge 2$}}

\subsubsection*{ \underline{Sub-case: $\lambda_4 \sim \lambda_3  $ }}

In this case, we have
$$
C(\lambda) \sim  \lambda_1 ^\frac n2   \angles{\lambda_1}^{-s} \lambda_3^{\frac n2-\frac52  -s+ 3\delta }  
$$

\begin{itemize}
\item 
If $\lambda_1 \lesssim 1$, then we can ignore $\angles{\lambda_1}^{-s}$, and therefore $\mathcal S$ is summable provided $s>n/2-5/2 + 3\delta$. 

\item If $\lambda_1 \gg 1$, then 
$
C(\lambda) \sim  \lambda_1 ^{n/2-s}    \lambda_3^{ n/2-5/2  -s+ 3\delta }  ,
$
and hence $\mathcal S$ is summable provided $s>n/2$.  On the other hand, if 
 $s\le n/2$, we have
$
C(\lambda) \sim  \lambda_1 ^{-\delta}    \lambda_3^{n-5/2  -2s+ 4\delta }  ,
$
and hence $\mathcal S$ is summable provided $s>n/2-5/4 + 4 \delta$. 
\end{itemize}

\subsubsection*{\underline{Sub-case: $\lambda_4 \ll \lambda_3  $ }}
In this case, we have 
$$
C(\lambda) \sim  \lambda_4 \angles{\lambda_4}^{s-2} \lambda_1 ^\frac n2  
\angles{\lambda_1}^{-s}   \lambda_3^{\frac n2-\frac32  -2s+ 3\delta }   .
$$

\begin{itemize}
\item
If $\lambda_4,  \lambda_1  \lesssim 1$, then we ignore $\angles{\lambda_4}^{s-2}$ and $\angles{\lambda_1}^{-s} $, and therefore $\mathcal S$ can be summed provided $s>(n-3)/4$. 

\item If $\lambda_4 \lesssim 1$ and $\lambda_1 \gg 1$, then 
$
C(\lambda) \sim  \lambda_4 \lambda_1 ^{n/2  -s}
  \lambda_3^{ n/2-3/2  -2s+ 3\delta }   ,
$
and hence $\mathcal S$ can be summed if $s>n/2$. 

On the other hand, if $s\le n/2$, then
$
C(\lambda) \lesssim   \lambda_4   \lambda_1^{-\delta}    \lambda_3^{n-3/2  -3s+ 4\delta }  .
$
Hence $\mathcal S$ can be summed for $s>n/3-1/2+( 4/3) \delta$.

\item If $\lambda_4 \gg 1$ and $\lambda_1 \lesssim 1$, then 
$
C(\lambda) \sim  \lambda_4^{s-1} \lambda_1 ^{n/2 }
  \lambda_3^{n/2-3/2  -2s+ 3\delta }  .
$
Now if $n\le 4$ and $(n-3)/4<s< 1$, then  $\mathcal S$ can be summed. In the case $n\le 4$ and $s\ge 1$, we have
$$
C(\lambda) \lesssim \lambda_4  ^{-\delta}   \lambda_1 ^{ n/2 } \lambda_3^{ n/2- 5/2  -s+ 4\delta }  \lesssim \lambda_4  ^{-\delta}   \lambda_1 ^{ n/2 } \lambda_3^{ -3/2 + 4\delta }  , 
$$
and hence $\mathcal S$ can be summed. 

In the case $n\ge 5$ and $s>n/2-5/4 +4\delta$ (which implies $s> 1$), we can use the estimate $
C(\lambda) \lesssim \lambda_4  ^{-\delta}   \lambda_1 ^{ n/2 } \lambda_3^{ n/2- 5/4  -s+ 4\delta } $ to sum up $\mathcal S$.

\item Finally, if $\lambda_4, \lambda_1 \gg  1$, then
$
C(\lambda) \sim  \lambda_4^{s-1} \lambda_1 ^{n/2 -s}
 \lambda_3^{ n/2-3/2  -2s+ 3\delta }   ,
$
and hence $\mathcal S$ can be summed easily for $s>n/2$. 

On the other hand, if
 $1\le s\le n/2$, then we use 
$
C(\lambda) \sim  \lambda_4 ^{-\delta }   \lambda_1^{-\delta}    \lambda_3^{n-5/2  -2s+ 5\delta }  ,
$
to sum up $\mathcal S$ provided
$s>n/2-5/4 + 5\delta $. 

If
 $s<1$, then we use 
$
C(\lambda) \sim  \lambda_4^{s-1}  \lambda_1^{-\delta}    \lambda_3^{n-3/2  -3s+ 4\delta }  
$
to sum up $\mathcal S$ provided $s>n/3-1/2 + 4\delta $. 

\end{itemize}

\vspace{2mm}

\textbf{ \underline{Conclusion:}} From the cases (i), (ii) and (iii), we conclude that the estimate \eqref{mathS} holds provided
$$
s > \max \left(n/3-1/2, \ n/2-5/2, \  n/2-5/4, \ n/4-3/4 \right) = \begin{cases}
& n/3-1/2, \qquad  for \quad 2\le n \le 4,
\\
& n/2-5/4, \qquad  for \quad  n \ge 5.
\end{cases}
$$

Similarly, in the case $n=1$, \eqref{mathS} holds provided
$$
s>-1/6.
$$

\section{Analyticity}
In this section, we show the analyticity of solutions stated in Theorem \ref{theo-1-anal} by using the space $H^{\sigma,2}(\R)$.

As we mentioned before, the local existence in $H^s(\R)$ implies that \eqref{nsB} is locally well-posed in $H^{\sigma,2}(\R)$. We first show this fact.

By applying $\cosh(\sigma|D|)$ to \eqref{contract-op} and taking the $H^2(\R)$-norm on both sides, we see from $ |D|^3 S_m(t)  \lesssim 1$ that
\[
\sup_{t\in[0,T]}(\norm{u(t)}_{H^{\sigma,2}(\R)}+\norm{u_t(t)}_{H^{\sigma, -1}(\R)})
\lesssim
\norm{u_0}_{H^{\sigma,2}(\R)}+\norm{u_1}_{H^{\sigma, -1}(\R)}
+\int_0^T\|u^3(s)\|_{H^{\sigma, -1}(\R)}d s
\]
for some $T>0$. Now if we set $U=e^{\sigma|D|}u$,   we have from the Plancherel equality and the Sobolev embedding that
\[
\begin{split}
	\norm{e^{\sigma|D|}\left(e^{-\sigma|D|}U\right)^3}_{L^2(\R)}&=
\norm{\int_{\xi=\xi_1+\xi_2+\xi_3}e^{\sigma(|\xi|-|\xi_1|-|\xi_2|-|\xi_3|)}\prod_{j=1}^3\hat{U}(\xi_j)\;d\xi_1d\xi_2 d\xi_3}_{L^2(\R)}\\
&
\leq \norm{V^3}_{L^2(\R)}\\
&
\lesssim \norm{V}_{L^6(\R)}^3\\&
\lesssim \norm{V}_{H^2(\R)}^3 
\lesssim \norm{U}_{H^2(\R)}^3,
\end{split}
\]
where $V=(|\hat{U}|)^\vee$. This means that
\[
\norm{u^3}_{G^{\sigma,0}(\R)}
\lesssim
\norm{u}_{G^{\sigma,2}(\R)} ^3,
\]
if $n\leq 4$. Thus, the local well-posedness of \eqref{nsB} in $H^{\sigma,2}(\R)\times H^{\sigma, -1}(\R)$ is deduced.

Now, for a local solution $u(t)$ of \eqref{nsB}, we define $v_\sigma(t)=\cosh(\sigma|D|)u(t)$ and the modified energy
\begin{equation}
	E_\sigma(t)
	=\frac12\int_{R}|(-\Delta)^{-\frac12} (v_\sigma)_t|^2+|v_\sigma|^2+\beta|\nabla v\sigma|^2+ |\Delta v_\sigma|^2 d x+\frac14\int_{R} |v_\sigma|^4d x
\end{equation} 
 It is straightforward from \eqref{nsB} to see that
 \[
 E_\sigma(t)
 =E_\sigma(0)+\int_0^t\int_{R}
 (v_\sigma)_t
 \left(    f(v_\sigma)-\cosh(\sigma|D|) f(\sech(\sigma|D|)v_\sigma)\right)dx,
 \]
where $f(v)=v^3$.

The following lemma provides the principal estimate in the proof of Theorem \ref{theo-1-anal}.
 \begin{lemma}\label{nonlin-est}
 	Let $f(v)=v^3$. We have for $(v_\sigma)_t\in L^2(\R)$ and $v_\sigma\in H^2(\R) $ that
 	\begin{equation}
 		\left| \int_{R}
 		(v_\sigma)_t
 		\left(    f(v_\sigma)-\cosh(\sigma|D|) f(\sech(\sigma|D|)v_\sigma)\right)dx\right|\lesssim
 		\sigma^2
 		\norm{v_\sigma}_{H^2(\R)}^3
 		\norm{|D|^{-1}(v_\sigma)_t}_{L^2(\R)}.
 	\end{equation}
 \end{lemma}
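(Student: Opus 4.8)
The plan is to transfer the estimate to the Fourier side, where the difference becomes a trilinear expression whose multiplier carries a quadratic gain in $\sigma$. Set
\[
g:=f(v_\sigma)-\cosh(\sigma|D|)f\!\left(\sech(\sigma|D|)v_\sigma\right),\qquad f(v)=v^3 .
\]
Using $\widehat{\sech(\sigma|D|)v_\sigma}(\eta)=\sech(\sigma|\eta|)\widehat{v_\sigma}(\eta)$ and $\widehat{\cosh(\sigma|D|)h}(\xi)=\cosh(\sigma|\xi|)\widehat{h}(\xi)$, and writing $\xi=\xi_1+\xi_2+\xi_3$, one gets
\[
\widehat{g}(\xi)=c\int_{\xi=\xi_1+\xi_2+\xi_3}\Theta_\sigma(\xi_1,\xi_2,\xi_3)\,\widehat{v_\sigma}(\xi_1)\widehat{v_\sigma}(\xi_2)\widehat{v_\sigma}(\xi_3)\,d\xi_1\,d\xi_2,
\qquad
\Theta_\sigma=1-\frac{\cosh(\sigma|\xi|)}{\cosh(\sigma|\xi_1|)\cosh(\sigma|\xi_2|)\cosh(\sigma|\xi_3|)} .
\]
Since $\cosh$ is even, each $|\xi_j|$ may be replaced by $\xi_j$ and $|\xi|$ by $\xi_1+\xi_2+\xi_3$; expanding $\cosh(\sigma\xi_1+\sigma\xi_2+\sigma\xi_3)$ by the addition formula and dividing by $\cosh(\sigma\xi_1)\cosh(\sigma\xi_2)\cosh(\sigma\xi_3)$ gives the exact identity
\[
\Theta_\sigma(\xi_1,\xi_2,\xi_3)=-\big(\tanh(\sigma\xi_1)\tanh(\sigma\xi_2)+\tanh(\sigma\xi_1)\tanh(\sigma\xi_3)+\tanh(\sigma\xi_2)\tanh(\sigma\xi_3)\big),
\]
whence, by $|\tanh t|\le|t|$ in each factor,
\[
|\Theta_\sigma(\xi_1,\xi_2,\xi_3)|\le\sigma^2\big(|\xi_1||\xi_2|+|\xi_1||\xi_3|+|\xi_2||\xi_3|\big).
\]
This is precisely where the choice of the $\cosh$-weight (rather than $e^{\sigma|\xi|}$) pays off, producing the factor $\sigma^2$ in place of the $\sigma$ one would get from $1-e^{\sigma(|\xi|-|\xi_1|-|\xi_2|-|\xi_3|)}\le\sigma(|\xi_1|+|\xi_2|+|\xi_3|-|\xi|)$; I expect this algebraic step to be the conceptual core of the proof.

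Next, since $\cosh(\sigma|D|)$ and $\sech(\sigma|D|)$ have real even symbols, $g$ inherits the reality of $v_\sigma$, and by Plancherel and Cauchy--Schwarz,
\[
\left|\int_{\R}(v_\sigma)_t\,g\,dx\right|
=\left|\int_{\R}|\xi|^{-1}\,\widehat{(v_\sigma)_t}(\xi)\cdot|\xi|\,\overline{\widehat{g}(\xi)}\,d\xi\right|
\le\norm{|D|^{-1}(v_\sigma)_t}_{L^2(\R)}\,\norm{|D|g}_{L^2(\R)} .
\]
Thus it remains to prove $\norm{|D|g}_{L^2(\R)}\lesssim\sigma^2\norm{v_\sigma}_{H^2(\R)}^3$, which by the multiplier bound above reduces to estimating, for each of the three terms in $\Theta_\sigma$, the $L^2_\xi$-norm of $|\xi|\,|\xi_i||\xi_j|$ integrated against $|\widehat{v_\sigma}(\xi_1)||\widehat{v_\sigma}(\xi_2)||\widehat{v_\sigma}(\xi_3)|$ over $\xi=\xi_1+\xi_2+\xi_3$.

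For this last step I combine $|\xi|=|\xi_1+\xi_2+\xi_3|\le|\xi_1|+|\xi_2|+|\xi_3|\lesssim\max_j|\xi_j|$ with the symbol bound. By symmetry take $|\xi_3|=\max_j|\xi_j|$, so that $|\xi|\lesssim|\xi_3|$ and the total weight $|\xi|\,|\xi_i||\xi_j|$ is dominated by $|\xi_1||\xi_2||\xi_3|$ when $\{i,j\}=\{1,2\}$, and by $|\xi_1||\xi_3|^2$ (resp.\ $|\xi_2||\xi_3|^2$) when $\{i,j\}$ contains $3$; in every case no input frequency carries more than two derivatives. Interpreting the convolution as a product in physical space (Plancherel), the relevant $L^2_\xi$-norm equals $\norm{V_1V_2V_3}_{L^2(\R)}$ with $\widehat{V_\ell}(\xi)=|\xi|^{k_\ell}|\widehat{v_\sigma}(\xi)|$, where $0\le k_\ell\le2$ and $k_1+k_2+k_3=3$; estimating by H\"older in the form $L^\infty_x\cdot L^\infty_x\cdot L^2_x\hookrightarrow L^2_x$, placing the factor with two derivatives in $L^2_x$, and using the one-dimensional embedding $H^1(\R)\hookrightarrow L^\infty(\R)$ together with $\norm{\bigl(|\xi|^{k}|\widehat{v_\sigma}(\xi)|\bigr)^\vee}_{H^s(\R)}\lesssim\norm{v_\sigma}_{H^{k+s}(\R)}$ for $k\ge0$, each of the three factors costs at most $\norm{v_\sigma}_{H^2(\R)}$, and summing the finitely many cases yields the claim. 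The only genuine obstacle here is bookkeeping: one must always route the extra derivative coming from $|D|$ onto the largest of the three input frequencies so that the budget of two derivatives per factor is never exceeded — in one space dimension this is comfortable.
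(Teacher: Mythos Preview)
Your proof is correct and follows essentially the same route as the paper: split off $\norm{|D|^{-1}(v_\sigma)_t}_{L^2}$ by Cauchy--Schwarz, bound the trilinear multiplier $\Theta_\sigma$ by $\sigma^2\sum_{i<j}|\xi_i||\xi_j|$, then absorb $|\xi|\lesssim\max_j|\xi_j|$ and close with H\"older $L^\infty\cdot L^\infty\cdot L^2$ and $H^1(\R)\hookrightarrow L^\infty(\R)$. The only difference is that where the paper cites an external lemma (from \cite{dmt}) for the inequality \eqref{coshpest}, you derive it directly via the exact identity $\Theta_\sigma=-\sum_{i<j}\tanh(\sigma\xi_i)\tanh(\sigma\xi_j)$ together with $|\tanh t|\le|t|$; this is a nice self-contained touch but otherwise the arguments coincide.
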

\begin{proof}
	First we note that
	\begin{eqnarray*}
		&&\int_{\R}
		(v_\sigma)_t
		\left(     v_\sigma^3 -\cosh(\sigma|D|)  (\sech(\sigma|D|)v_\sigma)^3\right)d x \\
		&&\quad
		\lesssim 
		\||D|^{-1}(v_\sigma)_t\|_{L^2(\R)}
		\left\||D|\left(  v_\sigma ^3-\cosh(\sigma|D|)  (\sech(\sigma|D|)v_\sigma)^3\right)\right\|_{L^2(\R)}\\
		&&\quad
		\lesssim \||D|^{-1}(v_\sigma)_t\|_{L^2(\R)}
		\underbrace{\left\|
			|\xi|\int_{\xi=\sum_{j=1}^3\xi_j}\left(1-\cosh(\sigma|\xi|)\prod_{j=1}^3\sech(\sigma|\xi_j|)\right)
			\prod_{j=1}^3\widehat{v_\sigma}(\xi_j) d \xi_1d\xi_2 d \xi_3
			\right\|_{L^2(\R)}}_{:= J}.
	\end{eqnarray*}
To estimate $J$ we need the following estimate from \cite[Lemma 3]{dmt}:
\begin{equation}\label{coshpest}
	\xi=\sum_{j=1}^p \xi_j   \quad \Rightarrow \quad \left|1 -  \cosh |\xi| \prod_{j=1}^p  \sech |\xi_j|   \right| \le 2^p \sum_{  j\neq k =1}^p   |\xi_j| |\xi_k|\qquad (2\leq p\in\N).
\end{equation}

By symmetry, we may assume $|\xi_1|\leq |\xi_2|\leq    |\xi_3|$, and hence $|\xi|\le 3 |\xi_3|$.
Consequently,  denoting
$w_\sigma=\mathcal{F}^{-1}_x(|\widehat{v_\sigma}|)$, we obtain
from the Plancherel identity, the H\"{o}lder inequality and Sobolev embedding that
\begin{align*}
	J & \lesssim \sigma^2 \left\|
	\int_{\xi=\sum_{j=1}^3\xi_j}
	 \widehat{w_\sigma}(\xi_1)  \cdot |\xi_{2}| \widehat{w_\sigma}(\xi_{2}) \cdot |\xi_3|^2\widehat{w_\sigma}(\xi_{3})\;  d \xi_1d\xi_2 d \xi_3
	\right\|_{L^2(\R)}
	\\
	& \lesssim \sigma^2 \left\| w_\sigma  \cdot |D| w_\sigma  \cdot |D|^2 w_\sigma 
	\right\|_{L^2(\R)}
	\\
	& \lesssim \sigma^2  \left\|   w_\sigma \right\| _{L^\infty (\R)  }   \left\|  |D| w_\sigma \right\|_{L^\infty (\R)}   \left\|  |D|^2 w_\sigma \right\|_{L^2(\R)}
	\\
	& \lesssim \sigma^2   \left\|   v_\sigma \right\|^3_{H^2(\R)}.
\end{align*}
\end{proof}

Now we are in a position to complete the proof of Theorem \ref{theo-1-anal}.

We have from Lemma \ref{nonlin-est} and the definition of $E_\sigma$ that
$$
E_\sigma(t) \le E_\sigma(0) + C T \sigma^2  \||D|^{-1}(v_\sigma)_t\|_{L^\infty_TL^2(\R)}  \left\|   v_\sigma \right\|^3_{ L^\infty_TH^2(\R)}
$$
for all $0\le t \le T$. Now, if $T$ is the local existence time, then we have 
\begin{align*}
	\left\|   v_\sigma \right\|_{ L^\infty_TH^2(\R)} + \||D|^{-1}(v_\sigma)_t\|_{L^\infty_TL^2(\R)} & \le C \left[
	\left\|  v_\sigma(\cdot, 0)\right\|_{ L^\infty_TH^2(\R)}  +  \||D|^{-1}  \partial_t v_\sigma(\cdot, 0)\|_{L^\infty_TL^2(\R)}  \right]
	\\
	&\le C \sqrt{ E_\sigma(0)}.
\end{align*}
Consequently,
$$
\sup_{0 \le t\le T}  E_\sigma(t) \le E_\sigma(0) + C T \sigma^2   \left[ E_{\sigma}(0)\right]^{\frac{p+1}2}  .
$$
This will eventually yield the asymptotic decay rate $\sigma \sim 1/ \sqrt t$ as $t \rightarrow + \infty$.

\section*{Acknowledgments}
A. E. is supported by the Nazarbayev University under Faculty Development Competitive Research Grants Program  for 2023-2025 (grant number 20122022FD4121). A. T. is supported by the Faculty Development Competitive Research Grants Program 2022-2024, Nazarbayev University: \emph{Nonlinear Partial Differential Equations in Material Science  (Ref. 11022021FD2929)}

\vspace{5mm}

\end{document}